\newtheorem{theorem}{Theorem}[section]
\newtheorem{lemma}[theorem]{Lemma}
\newtheorem{proposition}[theorem]{Proposition}
\newtheorem{corollary}[theorem]{Corollary}
\newtheorem{Example}[theorem]{Example}
\newtheorem{fact}[theorem]{Fact}
\long\def\@savemarbox#1#2{\global\setbox#1\vtop{\hsize\marginparwidth 
  \@parboxrestore\tiny\raggedright #2}}
\title{On Borel Anosov representations in even dimensions}
\author{Konstantinos Tsouvalas}
\address{Department of Mathematics, University of Michigan, 530 Church
Street, Ann Arbor, MI 48109, USA \newline \small{\emph{E-mail address}:  \textsf{tsouvkon@umich.edu}}}
\begin{document}
\maketitle
\begin{abstract} We prove that a word hyperbolic group which admits a $P_{2q+1}$-Anosov representation into $\mathsf{PGL}(4q+2, \mathbb{R})$ contains a finite-index subgroup which is either free or a surface group. As a consequence, we give an affirmative answer to Sambarino's question for Borel Anosov representations into $\mathsf{SL}(4q+2,\mathbb{R})$. \end{abstract}
\section{Introduction}
In this note, we address the following question of Andr\'es Sambarino and provide a positive answer when $d=4q+2$ for some $q \in \mathbb{N}$.
\medskip

\noindent \textbf{Sambarino's Question:} \emph{Suppose that $\Gamma$ is a torsion free word hyperbolic group which admits a Borel Anosov representation into $\mathsf{SL}(d,\mathbb{R})$. Is $\Gamma$ necessarily free or a surface group?.}
\medskip

\par Anosov representations of word hyperbolic groups into real semisimple Lie groups were introduced by Labourie \cite{labourie-invent} in his study of the Hitchin component. They are discrete subgroups of real reductive Lie groups which generalize convex cocompact subgroups of rank one Lie groups. A representation $\rho:\Gamma \rightarrow \mathsf{GL}(d,\mathbb{R})$ is called \emph{$P_{k}$-Anosov}, where $1\leqslant k\leqslant \frac{d}{2}$, if it is Anosov with respect to the pair of opposite parabolic subgroups of $\mathsf{GL}(d,\mathbb{R})$ defined as the stabilizers of a $k$-plane and a complementary $(d-k)$-plane (see subsection \ref{definition}). The representation $\rho$ is called \emph{Borel Anosov} if $\rho$ is $P_k$-Anosov for every $k$. Labourie in \cite{labourie-invent} proved that every Hitchin representation into $\mathsf{PSL}(d,\mathbb{R})$ is irreducible and admits a lift into $\mathsf{GL}(d,\mathbb{R})$ which is Borel Anosov. The only known examples of Borel Anosov representations are constructed from representations of free or surface groups. By a surface group we mean the fundamental group of a closed surface of negative Euler characteristic. Hitchin representations are the only known examples of Borel Anosov representations of surface groups in even dimensions. In all odd dimensions, Barbot's construction \cite{Barbot} can be used to produce reducible examples.

A positive answer to Sambarino's question was given in \cite{CT} for $d=3$ or $4$. By using results of Benoist in \cite{benoist-divisible0,benoist-divisible3}, we prove that a torsion free word hyperbolic group admitting a $P_{2q+1}$-Anosov representation into $\mathsf{GL}(4q+2,\mathbb{R})$ has to be either free or a surface group. Moreover, by using Wilton's result \cite{Wilton} on the existence of quasiconvex surface groups or rigid subgroups in one ended-word hyperbolic groups and a theorem of Kapovich-Leeb-Porti in \cite{KLP2} (see also \cite[Theorem 6]{KLP3}), we prove the following stronger statement: 

\begin{theorem} \label{maintheorem} Let $\Gamma$ be a word hyperbolic group and $\rho:\Gamma \rightarrow \mathsf{GL}(4q+2, \mathbb{R})$ a representation. Suppose that there exists a continuous, $\rho$-equivariant dynamics preserving map $\xi:\partial_{\infty}\Gamma \rightarrow \mathsf{Gr}_{2q+1}(\mathbb{R}^{4q+2})$. Then $\Gamma$ is virtually free or virtually a surface group. \end{theorem}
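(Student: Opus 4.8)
The idea is to reduce to a torsion-free, one-ended situation, to extract from $\xi$ a symplectic projective Anosov representation whose very existence uses that $2q+1$ is odd, and then to rule out everything except free and surface groups by combining Benoist's theory of divisible convex sets with Wilton's surface-subgroup theorem. For the reductions: since $\xi$ is dynamics preserving, no infinite-order element of $\Gamma$ lies in $\ker\rho$ (the identity has no attracting fixed point on $\mathsf{Gr}_{2q+1}(\mathbb{R}^{4q+2})$), so $\ker\rho$ is a torsion subgroup of a word hyperbolic group and is therefore finite; as $\Gamma/\ker\rho\cong\rho(\Gamma)$ is finitely generated linear, it is virtually torsion-free by Selberg's lemma. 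Passing to $\rho(\Gamma)$ and then to a torsion-free finite-index subgroup changes neither the hypotheses (restrict $\xi$; the boundary is unchanged up to the finite kernel) nor the conclusion, so we may assume $\Gamma$ torsion-free and $\rho$ faithful. By Kapovich--Leeb--Porti \cite{KLP2} (see also \cite[Theorem 6]{KLP3}) the existence of the continuous equivariant dynamics preserving map $\xi$ already forces $\rho$ to be $P_{2q+1}$-Anosov, hence equipped with transverse limit maps $\xi,\xi^{-}\colon\partial_\infty\Gamma\to\mathsf{Gr}_{2q+1}(\mathbb{R}^{4q+2})$. Finally, if $\Gamma$ has more than one end, Stallings' theorem together with Dunwoody accessibility gives $\Gamma=\Gamma_1\ast\dots\ast\Gamma_\ell\ast F_r$ with each $\Gamma_i$ one-ended and quasiconvex, and the restriction of an Anosov representation to a quasiconvex subgroup is Anosov (\cite[Theorem 6]{KLP3}), so each $\rho|_{\Gamma_i}$ is again $P_{2q+1}$-Anosov with limit maps obtained by restricting to $\partial_\infty\Gamma_i\subset\partial_\infty\Gamma$; hence it suffices to treat one-ended $\Gamma$, and if no one-ended factor occurs then $\Gamma=F_r$ is free.

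Assume now $\Gamma$ is one-ended. Composing $\xi$ and $\xi^{-}$ with the Pl\"ucker embedding $\mathsf{Gr}_{2q+1}(\mathbb{R}^{4q+2})\hookrightarrow\mathbb{P}\bigl(\bigwedge^{2q+1}\mathbb{R}^{4q+2}\bigr)$ exhibits $\bigwedge^{2q+1}\rho$ as a projective Anosov representation, since a uniform singular-value gap for $\rho$ at index $2q+1$ becomes one at index $1$ for the exterior power. The arithmetic point behind restricting to dimension $4q+2$ is that the middle index $2q+1$ is \emph{odd}, so the wedge pairing $\bigwedge^{2q+1}\mathbb{R}^{4q+2}\times\bigwedge^{2q+1}\mathbb{R}^{4q+2}\to\bigwedge^{4q+2}\mathbb{R}^{4q+2}\cong\mathbb{R}$ is alternating; hence $\bigwedge^{2q+1}\rho$ preserves, up to the character $\det\rho$, a symplectic form $\omega$, its proximal limit set consists of $\omega$-isotropic lines, and at each point the opposite limit hyperplane is the $\omega$-orthogonal one. (When $q=0$ this structure is empty and one simply uses that $\xi$ embeds $\partial_\infty\Gamma$ into $\mathbb{P}(\mathbb{R}^{2})\cong\mathbb{S}^{1}$, recovering convex cocompact Fuchsian groups; for $q\geq1$ the plan is to replace this circle by a convex domain built from the symplectic data.)

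The remaining, and main, part of the argument must use this self-dual structure to confine $\Gamma$. Concretely, the plan is to produce from the limit maps a $\Gamma$-invariant properly convex set and a (convex) cocompact $\Gamma$-action on it inside a $\Gamma$-invariant projective subspace of $\mathbb{P}\bigl(\bigwedge^{2q+1}\mathbb{R}^{4q+2}\bigr)$, and then to bring in Benoist's work \cite{benoist-divisible0,benoist-divisible3} on the interplay between word-hyperbolicity, strict convexity, and the topology of the boundary: $\Gamma$ being hyperbolic, the convex set is forced to be strictly convex with $C^{1}$ boundary, and its boundary is identified $\Gamma$-equivariantly with $\partial_\infty\Gamma$. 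One then runs Wilton's theorem \cite{Wilton}: either $\Gamma$ is virtually a closed surface group --- and we are done --- or $\Gamma$ contains a quasiconvex ``rigid'' subgroup $R$, which is one-ended, not virtually a surface group, and inherits a $P_{2q+1}$-Anosov restriction by \cite[Theorem 6]{KLP3}; the goal is then to contradict the rigidity of $R$ by showing it cannot support such a convex structure. I expect this last point --- the incompatibility of Wilton's rigid subgroups with the convex-geometric consequences of the symplectic self-duality available only because $4q+2=2(2q+1)$ with $2q+1$ odd --- to be the main obstacle, requiring the numerics of the dimension to be exploited so as to bound the size of the invariant convex set (equivalently, the topological dimension of $\partial_\infty R$) down to the point where only the circle and Cantor-set cases survive, whence $\Gamma$ would be virtually a surface group by the convergence-group theorem of Casson--Jungreis, Gabai and Tukia (respectively virtually free by Stallings' theorem). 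The reductions and the passage to the exterior power, by contrast, are routine.
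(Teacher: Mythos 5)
Your reductions (finite kernel via Proposition \ref{discrete}, Selberg, Stallings plus accessibility) and your identification of the symplectic pairing on $\wedge^{2q+1}\mathbb{R}^{4q+2}$ as the feature special to dimension $4q+2$ are in line with the paper, but there are two genuine gaps. The first is the claim that the existence of the continuous $\rho$-equivariant dynamics-preserving map $\xi$ ``already forces $\rho$ to be $P_{2q+1}$-Anosov'': this is false, and the paper itself exhibits a counterexample in Section \ref{examples}, namely a surface-group representation into $\mathsf{SL}(4q+2,\mathbb{R})$ which is \emph{not} $P_{2q+1}$-Anosov yet admits such a map. The Kapovich--Leeb--Porti characterization requires more than a dynamics-preserving map (transversality/antipodality of a pair of maps together with an expansion property), so from the hypothesis of the theorem you get neither a second transverse limit map $\xi^{-}$ nor the fact that restrictions to quasiconvex subgroups are Anosov, and everything downstream of this claim is unsupported. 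What the paper actually uses from \cite{KLP2} is a much weaker, local statement: the ping-pong theorem \cite[Theorem 7.40]{KLP2} (Lemma \ref{free}) produces, from a single biproximal element whose attracting and repelling points are matched by $\xi$, a rank-two free quasiconvex subgroup on which $\wedge^{2q+1}\rho$ is $P_1$-Anosov with limit map $\tau_{2q+1}^{+}\circ\xi$.

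The second gap is that the heart of the argument --- excluding one-ended groups that are not virtually surface groups --- is only announced as ``expected,'' and the route you sketch (a cocompact, divisible invariant convex set, Benoist's strict-convexity theory, then bounding the dimension of the boundary of a rigid subgroup) is not available: nothing in the hypotheses provides a cocompact action on a convex set. The paper's mechanism is different and does not require it: connectedness of $\xi(\partial_\infty\Delta)$ for a one-ended factor $\Delta$ (exploiting an auxiliary $\mathbb{Z}$ free factor, Lemma \ref{positive1}), or of $\xi(\partial_\infty\Gamma\setminus\{\delta^{+},\delta^{-}\})$ for a rigid subgroup (Bowditch's no-local-cut-point theorem, Lemma \ref{positive2}), forces the image of the limit map into an affine chart, so the group preserves a properly convex domain in the projectivized span; lifting to a cone (Fact \ref{index2}, Lemma \ref{cone}) then makes every proximal element of $\wedge^{2q+1}\rho(\Gamma_2)$ \emph{positively} proximal. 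One then deforms the free subgroup furnished by Lemma \ref{free} within the open set of $P_{2q+1}$-Anosov representations to a Zariski-dense one (Theorem \ref{mainproperties}(iii) and Proposition \ref{density}), preserving positivity by continuity of the top eigenvalue (Fact \ref{continuity}); only at that point does Benoist's Theorem \ref{positivity3} apply (it needs irreducibility), yielding an invariant properly convex cone, which contradicts invariance of the symplectic form $\omega_q$ by \cite[Corollary 3.5]{benoist-divisible0}. The positivity step, the Zariski-dense deformation, and these precise Benoist inputs are all absent from your proposal, so the main step is missing. (Minor: for $q=0$ the symplectic structure is not ``empty''; $\omega_0$ is the area form on $\mathbb{R}^2$ and the same argument runs uniformly.)
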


\noindent The group $\Gamma$ is virtually free (resp. a surface group) if it contains a finite-index subgroup which is free (resp. a surface group). The map $\xi$ is called dynamics preserving whenever $\gamma \in \Gamma$ is an infinite order element, $\rho(\gamma)$ is $P_k$-proximal and $\xi(\gamma^{+})$ is its attracting fixed point in $\mathsf{Gr}_{2q+1}(\mathbb{R}^{4q+2})$. An analogue of Theorem \ref{maintheorem} does not hold in dimensions which are multiples of $4$, see Section \ref{examples}.
\medskip

\begin{corollary}\label{cor1} Let $\mathsf{G}_{4q+2}$ be either $\mathsf{GL}(4q+2,\mathbb{R})$ or $\mathsf{PGL}(4q+2,\mathbb{R})$. If $\Gamma$ is a word hyperbolic group and $\rho:\Gamma \rightarrow \mathsf{G}_{4q+2}$ is a $P_{2q+1}$-Anosov representation, then $\Gamma$ is virtually free or virtually a surface group. \end{corollary}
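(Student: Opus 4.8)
The plan is to deduce Corollary \ref{cor1} directly from Theorem \ref{maintheorem}; the only thing to check is that in each of the two cases for $\mathsf{G}_{4q+2}$ a $P_{2q+1}$-Anosov representation produces a boundary map of the type required by Theorem \ref{maintheorem}. First I would unwind the definition of the Anosov property (subsection \ref{definition}): a $P_{2q+1}$-Anosov representation $\rho\colon\Gamma\to\mathsf{G}_{4q+2}$ comes equipped with continuous, $\rho$-equivariant, transverse limit maps valued in the partial flag variety $\mathsf{G}_{4q+2}/P_{2q+1}$, which here is the space of pairs $(V,W)$ of $(2q+1)$-planes with $V\oplus W=\mathbb{R}^{4q+2}$ (note $4q+2-(2q+1)=2q+1$). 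Composing the limit map with the $\mathsf{G}_{4q+2}$-equivariant projection to the first coordinate gives a continuous, $\rho$-equivariant map $\xi\colon\partial_\infty\Gamma\to\mathsf{Gr}_{2q+1}(\mathbb{R}^{4q+2})$.

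Next I would verify that $\xi$ is dynamics preserving. For an infinite order element $\gamma\in\Gamma$ the Anosov condition forces the eigenvalue gap $\lambda_{2q+1}(\rho(\gamma))>\lambda_{2q+2}(\rho(\gamma))$, so $\rho(\gamma)$ is $P_{2q+1}$-proximal, and by the construction of the limit map $\xi(\gamma^+)$ is exactly its attracting fixed point in $\mathsf{Gr}_{2q+1}(\mathbb{R}^{4q+2})$; this is a standard feature of Anosov representations. All the notions involved here — equivariance, $P_{2q+1}$-proximality, attracting fixed point in the Grassmannian — depend on $\rho$ only through the induced action on $\mathsf{Gr}_{2q+1}(\mathbb{R}^{4q+2})$, and the $\mathsf{GL}(4q+2,\mathbb{R})$-action on this Grassmannian factors through $\mathsf{PGL}(4q+2,\mathbb{R})$, so the discussion is identical for both choices of $\mathsf{G}_{4q+2}$.

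With $\xi$ in hand, when $\mathsf{G}_{4q+2}=\mathsf{GL}(4q+2,\mathbb{R})$ the hypotheses of Theorem \ref{maintheorem} hold verbatim, so $\Gamma$ is virtually free or virtually a surface group. When $\mathsf{G}_{4q+2}=\mathsf{PGL}(4q+2,\mathbb{R})$, the same $\xi$ is continuous, $\rho$-equivariant and dynamics preserving for the induced $\mathsf{PGL}(4q+2,\mathbb{R})$-action on $\mathsf{Gr}_{2q+1}(\mathbb{R}^{4q+2})$, and since Theorem \ref{maintheorem} and its proof invoke $\rho$ only through this action, the argument applies unchanged; alternatively one may replace $\rho$ by a lift to $\mathsf{GL}(4q+2,\mathbb{R})$ whenever such a lift exists and invoke the previous case, which affects neither hypothesis nor conclusion. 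I do not expect a genuine obstacle in this deduction — Corollary \ref{cor1} is a formal consequence of Theorem \ref{maintheorem}. The only points requiring (routine) care are the standard verification that the Anosov limit map is dynamics preserving and the bookkeeping identifying the $P_{2q+1}$ flag variety of $\mathsf{GL}(4q+2,\mathbb{R})$ and of $\mathsf{PGL}(4q+2,\mathbb{R})$ with $\mathsf{Gr}_{2q+1}(\mathbb{R}^{4q+2})$; the real content lies entirely in Theorem \ref{maintheorem}.
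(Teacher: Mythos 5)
Your deduction works for $\mathsf{G}_{4q+2}=\mathsf{GL}(4q+2,\mathbb{R})$: by Theorem \ref{mainproperties}(ii) a $P_{2q+1}$-Anosov representation has a continuous, $\rho$-equivariant, dynamics-preserving limit map into $\mathsf{Gr}_{2q+1}(\mathbb{R}^{4q+2})$, and Theorem \ref{maintheorem} applies. The gap is in the $\mathsf{PGL}$ case, and both of your proposed ways around it fail. First, it is not true that the proof of Theorem \ref{maintheorem} ``invokes $\rho$ only through the action on the Grassmannian'': the core of that proof is about signs of eigenvalues and invariant convex cones. It uses that $\ell_1(\wedge^{2q+1}\rho(\gamma))>0$ (Lemmas \ref{positive1} and \ref{positive2}), that a positively proximal irreducible subgroup preserves a properly convex cone in $\wedge^{2q+1}\mathbb{R}^{4q+2}$ (Theorem \ref{positivity3}), and that $\wedge^{2q+1}\mathsf{SL}(4q+2,\mathbb{R})$ preserves the symplectic form $\omega_q$. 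None of these notions descends to $\mathsf{PGL}$: since $2q+1$ is odd, $\wedge^{2q+1}(-g)=-\wedge^{2q+1}g$, so the sign of $\ell_1(\wedge^{2q+1}\rho(\gamma))$, and hence ``positively proximal,'' is not defined for a projective class; the whole positivity mechanism requires an honest linear representative. Second, the fallback ``replace $\rho$ by a lift to $\mathsf{GL}(4q+2,\mathbb{R})$ whenever such a lift exists'' does not dispose of the case where no lift exists, and in general a $\mathsf{PGL}(4q+2,\mathbb{R})$-representation of a hyperbolic group need not lift (the obstruction lies in $H^2(\Gamma;\mathbb{Z}/2)$, which can be nontrivial, e.g.\ for surface groups); you cannot simply say this ``affects neither hypothesis nor conclusion.''

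This is exactly the point the paper spends its effort on: it proves a separate proposition showing that the restriction of $\rho$ to any infinite-index one-ended quasiconvex subgroup (and $\rho$ itself when $\Gamma$ is rigid) admits a lift to $\mathsf{GL}(4q+2,\mathbb{R})$ whose $(2q+1)$-st exterior power is positively proximal --- the lift is built from an invariant properly convex cone over $\xi(\partial_\infty\Delta)$, not assumed. The corollary is then deduced not by citing Theorem \ref{maintheorem}, but by combining this partial-lifting proposition with Theorem \ref{positive} (no $P_{2q+1}$-Anosov free group has positively proximal $(2q+1)$-st exterior power) and Wilton's theorem, ruling out infinite-index one-ended quasiconvex subgroups and rigidity, and concluding that $\partial_\infty\Gamma$ is totally disconnected or a circle. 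To repair your argument you would need either this partial-lifting step or some other device that makes sense of positivity for $\mathsf{PGL}$-representations; as written, the $\mathsf{PGL}$ half of the corollary is not proved.
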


Let $\tau^{+}_{k}:\mathsf{Gr}_{k}(\mathbb{R}^d) \rightarrow \mathbb{P}(\wedge^{k}\mathbb{R}^d)$ be the Pl${\textup{\"u}}$cker embedding (see subsection \ref{proximality}). By using the connectedness properties of the boundary of a rigid hyperbolic group with the methods of the proof of Theorem \ref{maintheorem} we have:

\medskip

\begin{corollary} \label{rigid} Let $\Gamma$ be a torsion free rigid word hyperbolic group and $\rho:\Gamma \rightarrow \mathsf{GL}(4q+2,\mathbb{R})$ a representation. Suppose there exists a continuous $\rho$-equivariant map $\xi:\partial_{\infty}\Gamma \rightarrow  \mathsf{Gr}_{2q+1}(\mathbb{R}^{4q+2})$. Then the map $\xi$ is nowhere dynamics preserving and $\tau_{2q+1}^{+}\circ \xi$ is not spanning. \end{corollary}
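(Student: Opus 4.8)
The plan is to isolate and slightly strengthen the part of the proof of Theorem~\ref{maintheorem} that deals with rigid subgroups, arguing by contradiction from the connectedness properties of $\partial_\infty\Gamma$ together with the symplectic geometry of the middle exterior power --- the feature distinguishing $d\equiv 2\pmod 4$ from $d\equiv 0\pmod 4$.

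First I record what rigidity provides. A torsion free rigid word hyperbolic group does not split over a virtually cyclic subgroup, so it is one-ended, in particular not virtually free, and it is not virtually a surface group, since a virtually surface group splits over $\mathbb Z$. By Bowditch's description of cyclic splittings of one-ended hyperbolic groups via local cut points of the boundary, $\partial_\infty\Gamma$ is therefore connected and has no local cut points. So Theorem~\ref{maintheorem} already rules out any continuous $\rho$-equivariant map $\partial_\infty\Gamma\to\mathsf{Gr}_{2q+1}(\mathbb R^{4q+2})$ that is dynamics preserving at every infinite order element; the Corollary asserts that for a rigid $\Gamma$ the same connectedness obstructs the two genuinely weaker conditions as well, through the same ingredients used to prove Theorem~\ref{maintheorem} (Benoist's results on divisible convex sets, Wilton's surface-subgroup dichotomy, and the theorem of Kapovich--Leeb--Porti).

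The structural input is that $2q+1$ is odd and $4q+2=2(2q+1)$, so the wedge pairing $\wedge^{2q+1}\mathbb R^{4q+2}\times\wedge^{2q+1}\mathbb R^{4q+2}\to\wedge^{4q+2}\mathbb R^{4q+2}\cong\mathbb R$ is a perfect alternating pairing; hence $\binom{4q+2}{2q+1}$ is even and $\wedge^{2q+1}\mathbb R^{4q+2}$ carries a symplectic form $\omega$ which $\wedge^{2q+1}\rho(\Gamma)$ preserves up to the determinant character. Two consequences are that every line of the Pl\"ucker image $\mathcal X=\tau_{2q+1}^{+}\big(\mathsf{Gr}_{2q+1}(\mathbb R^{4q+2})\big)$ is $\omega$-isotropic, and that $\omega\big(\tau_{2q+1}^{+}(U),\tau_{2q+1}^{+}(U')\big)=0$ if and only if $U\cap U'\neq\{0\}$. (For $d\equiv 0\pmod 4$ this pairing is symmetric, so $\mathsf{SO}(p,q)$-lattices become available and the conclusion fails --- see Section~\ref{examples}.) For the spanning assertion, suppose $\psi:=\tau_{2q+1}^{+}\circ\xi$ is spanning; then $\omega$ is non-degenerate on $\operatorname{span}\psi(\partial_\infty\Gamma)=\wedge^{2q+1}\mathbb R^{4q+2}$, so $\wedge^{2q+1}\rho(\Gamma)\subset\mathsf{GSp}(\omega)$ and $\psi$ is a spanning continuous equivariant map into the isotropic variety $\mathcal X$. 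Since $\mathsf{GSp}(\omega)$ acts transitively on $\mathbb P(\wedge^{2q+1}\mathbb R^{4q+2})$ there is no $\mathsf{GSp}(\omega)$-invariant properly convex open subset of this projective space; consequently the divisible-convex-set alternative that disposes of the non-rigid cases in the proof of Theorem~\ref{maintheorem} is not available here, and the remaining connectedness argument forces a local cut point of $\partial_\infty\Gamma$, a contradiction. Hence $\psi$ is not spanning.

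For the dynamics-preservation assertion, suppose $\xi$ is dynamics preserving at some point $\gamma_0^{+}$, so that $\rho(\gamma_0)$ is $P_{2q+1}$-proximal with attracting fixed point $\xi(\gamma_0^{+})$. By $\rho$-equivariance, $\rho(g\gamma_0 g^{-1})$ is $P_{2q+1}$-proximal with attracting fixed point $\xi(g\gamma_0^{+})$ for every $g\in\Gamma$, so $\xi$ is dynamics preserving along the dense orbit $\Gamma\cdot\gamma_0^{+}\subseteq\partial_\infty\Gamma$. The proof of Theorem~\ref{maintheorem} draws on the dynamics-preservation hypothesis only through a dense family of proximal elements of this kind, together with the continuity of $\xi$ and the connectedness of $\partial_\infty\Gamma$; feeding in this family therefore yields, as before, that $\Gamma$ is virtually free or virtually a surface group, contradicting rigidity. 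Hence $\xi$ is dynamics preserving at no point. The main obstacle, in both parts, is the same: verifying that the connectedness arguments behind Theorem~\ref{maintheorem} survive this weakening --- i.e. that neither a spanning equivariant Pl\"ucker image nor dynamics-preservation at a single boundary point is compatible with $\partial_\infty\Gamma$ being connected with no local cut points --- which is exactly where the symplectic structure, and hence the congruence $d\equiv 2\pmod 4$, is used to exclude the convex-projective alternative.
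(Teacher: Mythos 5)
Your first assertion (that $\xi$ is nowhere dynamics preserving) is aimed at the right ingredients, but the reduction you rely on is not justified. The negation of ``nowhere dynamics preserving'' hands you a single element $\gamma$ with $\rho(\gamma)$ $P_{2q+1}$-\emph{bi}proximal and \emph{both} $\xi(\gamma^{+})=x_{\rho(\gamma)}^{+}$ and $\xi(\gamma^{-})=x_{\rho(\gamma)}^{-}$; the paper feeds exactly this into Lemma \ref{free} (the Kapovich--Leeb--Porti ping-pong, which needs the attracting points \emph{and} the repelling hyperplanes of $\gamma^{\pm1}$ to be compatible with $\xi$) to produce a quasiconvex free subgroup $H$ of $\Gamma_2$ on which $\wedge^{2q+1}\rho$ is $P_1$-Anosov with limit map $\tau_{2q+1}^{+}\circ\xi$, then into Lemma \ref{positive2} (no local cut points, so $\xi(\partial_{\infty}\Gamma-\{h^{+},h^{-}\})$ avoids both repelling hyperplanes and lies in an affine chart) to get positive proximality of every $\wedge^{2q+1}\rho(h)$, and concludes by Theorem \ref{positive}. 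You instead assume only one-sided proximality at a point $\gamma_0^{+}$ and claim that the proof of Theorem \ref{maintheorem} ``draws on the dynamics-preservation hypothesis only through a dense family of proximal elements of this kind.'' That is not how the proof works: without biproximality and the matching of the repelling data, neither Lemma \ref{free} nor Lemma \ref{positive2} applies, and you offer no substitute. Since the genuine negation does give you the biproximal element, this half is repairable by simply running the chain Lemma \ref{free} $\rightarrow$ Lemma \ref{positive2} $\rightarrow$ Theorem \ref{positive}, but as written the step is a gap.

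The second assertion (that $\tau_{2q+1}^{+}\circ\xi$ is not spanning) is where the proposal genuinely fails. Your argument is that $\mathsf{GSp}(\omega)$ acts transitively on $\mathbb{P}(\wedge^{2q+1}\mathbb{R}^{4q+2})$, hence preserves no properly convex open set, so ``the divisible-convex-set alternative is not available'' and ``the remaining connectedness argument forces a local cut point.'' This is a non sequitur: in all the convexity arguments of the paper the group preserving a convex set is a \emph{discrete} image of (a deformation of) the subgroup in question -- deformed to be Zariski dense precisely so that Benoist's Theorem \ref{positivity3} and the symplectic obstruction apply -- never the full symplectic group, and nothing in the proof of Theorem \ref{maintheorem} converts a spanning hypothesis on $\tau_{2q+1}^{+}\circ\xi$ into a local cut point of $\partial_{\infty}\Gamma$. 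What is actually needed, and what the paper does, is a separate dynamical argument: assuming spanning, minimality of the $\Gamma$-action on $\partial_{\infty}\Gamma$ together with the density of pairs $(\delta^{+},\delta^{-})$ produces $\gamma$ with $V_q=\xi^{+}(\gamma^{+})\oplus\xi^{-}(\gamma^{-})=\xi^{+}(\gamma^{-})\oplus\xi^{-}(\gamma^{+})$ (where $\xi^{\pm}=\tau_{2q+1}^{\pm}\circ\xi$), and then a Jordan-block limiting argument -- which uses spanning a second time, to choose $x\in\partial_{\infty}\Gamma$ whose image survives in the limit of the normalized powers of $\wedge^{2q+1}\rho(\gamma)$ -- shows that $\wedge^{2q+1}\rho(\gamma)$ is biproximal with attracting points $\xi^{+}(\gamma^{\pm})$, i.e.\ $\xi$ preserves the dynamics of $\{\gamma^{-},\gamma^{+}\}$, contradicting the first half. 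Your proposal contains no step that extracts any contradiction from the spanning hypothesis, so this part of the corollary is left unproved.
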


\noindent The map $\xi$ is called nowhere dynamics preserving if for every infinite order element $\gamma \in \Gamma$, the restriction of $\xi$ on $\{\gamma^{-},\gamma^{+}\}$ is not dynamics preserving.

\vspace{0.3cm}

\noindent \textbf{Acknowledgements.} I would like to thank my advisor Richard Canary for his support and many useful comments on earlier versions of this paper and Andr\'es Sambarino for his question. This work was partially supported by grants DMS-1564362 and DMS-1906441 from the National Science Foundation.

\section{Background}
In this section, we provide some background on proximality, define Anosov representations and state Benoist's results that we are going to use for the proof of the main theorem.

\subsection{Proximality.} \label{proximality} Let $d \geqslant 2$ and $e_1,..,e_d$ be the canonical basis of $\mathbb{R}^d$. For an element $g \in \mathsf{GL}(d,\mathbb{R})$ we denote by $\lambda_{1}(g)\geqslant \lambda_{2}(g) \geqslant ... \geqslant \lambda_{d}(g)$ the moduli of its eigenvalues. For $1 \leqslant k \leqslant \frac{d}{2}$, we denote by $P_{k}$ the stabilizer of the plane $\langle e_1,..,e_k \rangle$ and by $P_{k}^{-}$ the stabilizer of the complementary $(d-k)$-plane $\langle e_{k+1},...,e_d \rangle$. The Grassmannian of $k$-planes, $\mathsf{Gr}_{k}(\mathbb{R}^d)$ is identified with the quotient manifold $\mathsf{GL}(d,\mathbb{R})/P_{k}$. Similarly $ \mathsf{Gr}_{d-k}(\mathbb{R}^d)$ is identified with $\mathsf{GL}(d,\mathbb{R})/P_{k}^{-}$. A pair of planes $(V^{+},V^{-}) \in  \mathsf{Gr}_{k}(\mathbb{R}^d) \times  \mathsf{Gr}_{d-k}(\mathbb{R}^d)$ is \emph{transverse} if there exists $h \in \mathsf{GL}(d,\mathbb{R})$ such that $V^{+}=h\langle e_1,...,e_{k}\rangle$ and $V^{-}=h\langle e_{k+1},..,e_{d}\rangle$. An element $g \in \mathsf{GL}(d, \mathbb{R})$ is called $P_{k}$-\emph{proximal} if $\lambda_{k}(g)>\lambda_{k+1}(g)$. Equivalently, $g$ has two fixed points $x_{g}^{+}\in  \mathsf{Gr}_{k}(\mathbb{R}^d)$ and \hbox{$V_{g}^{-} \in  \mathsf{Gr}_{d-k}(\mathbb{R}^d)$} such that the pair $(x_{g}^{+},V_{g}^{-})$ is transverse and for every $k$-plane $V_{0}$ transverse to $V_{g}^{-}$, we have $\lim_{n}g^nV_{0}=x_{g}^{+}$. The element $g$ is called $P_{k}$-\emph{biproximal} if $g$ and $g^{-1}$ are $P_{k}$-proximal. We denote by $x_{g}^{-}$ the attracting fixed point of $g^{-1}$ in $ \mathsf{Gr}_{k}(\mathbb{R}^d)$.
For $k=1$, a $P_1$-proximal element $g \in \mathsf{GL}(d, \mathbb{R})$ in $\mathbb{P}(\mathbb{R}^d)$ has a unique eigenvalue, $\ell_1(g)$, of maximum modulus with multiplicity exactly one. The repelling hyperplane of $g$ is denoted by $V_{g}^{-}$. The matrix $g$ is called $P_1$-\emph{positively proximal} if $\ell_1(g)>0$.
\par The Pl${\textup{\"u}}$cker embeddings $\tau_{k}^{+}:\mathsf{Gr}_{k}(\mathbb{R}^d) \rightarrow \mathbb{P}(\wedge^k \mathbb{R}^d)$ and $\tau_{k}^{-}:\mathsf{Gr}_{d-k}(\mathbb{R}^d) \rightarrow \mathsf{Gr}_{d_k-1}(\wedge^{k}\mathbb{R}^d)$, $d_k=\binom{d}{k}$, where $$\tau_{k}^{+}(gP_{k})=[ge_1\wedge...\wedge ge_k] \ \ \ \tau^{-}_{k}(gP_{k}^{-})=[\wedge^{k}g (e_1 \wedge ... \wedge e_{k})^{\perp}]$$ are embeddings and an element $g$ is $P_k$-proximal if and only if $\tau_{k}^{+}(g)$ is $P_1$-proximal (see also \cite[Proposition 3.3]{GGKW} for more details).
\medskip

From now, unless specified, proximal (resp. positively proximal) will refer to $P_1$-proximality (resp. positive $P_1$-proximality) in the projective space.

\subsection{Dynamics preserving maps.} Let $\Gamma$ be a word hyperbolic group and denote by $\partial_{\infty}\Gamma$ its Gromov boundary. Every infinite order element $\gamma \in \Gamma$ has exactly two fixed points $\gamma^{+}$ and $\gamma^{-}$ on $\partial_{\infty}\Gamma$ called the attracting and repelling fixed points of $\gamma$ respectively. Let $\rho:\Gamma \rightarrow \mathsf{GL}(d, \mathbb{R})$ be a representation and $1 \leqslant k \leqslant d-1$. Suppose there exists a continuous $\rho$-equivariant map $\xi:\partial_{\infty}\Gamma \rightarrow \mathsf{Gr}_{k}(\mathbb{R}^d)$. The map $\xi$ is called \emph{dynamics preserving} if for every element $\gamma \in \Gamma$ of infinite order, $\rho(\gamma)$ is $P_k$-proximal and $\xi(\gamma^{+})=x_{\rho(\gamma)}^{+}$. The map $\xi$ is called \emph{nowhere dynamics preserving} if for every $\gamma \in \Gamma$ the restriction of $\xi$ on $\partial_{\infty}\langle \gamma \rangle=\{\gamma^{-},\gamma^{+}\}$ is not dynamics preserving.
\medskip

\subsection{Anosov representations.} \label{definition} The dynamical definition of Anosov representations (see \cite{GW, labourie-invent}) involves the geodesic flow of a word hyperbolic group. Characterizations of Anosov representations into real reductive Lie groups, without involving flow spaces, have been established in several papers, see \cite{BPS, GGKW, KLP1, KP}. Here we define Anosov representations by using a characterization of Kapovich-Leeb-Porti in \cite{KLP1} and Bochi-Potrie-Sambarino \cite{BPS}. For a finitely generated group $\Gamma$ we always fix a left-invariant word metric and for $\gamma \in \Gamma$, $|\gamma|_{\Gamma}$ is the distance of $\gamma$ from the identity element of $\Gamma$. For an element $g \in \mathsf{GL}(d,\mathbb{R})$ let $\sigma_{1}(g) \geqslant \sigma_{2}(g) \geqslant... \geqslant \sigma_{d}(g)$ be the singular values of $g$. Recall that for each $i$, $\sigma_{i}(g)=\sqrt{\lambda_{i}(gg^t)}$, where $g^t$ is the transpose of $g$. Notice that for an element $[h] \in \mathsf{PGL}(d,\mathbb{R})$ the ratio $\frac{\sigma_{i}(h)}{\sigma_{i+1}(h)}$ does not depend on the choice of the representative $h \in \mathsf{GL}(d,\mathbb{R})$.
\par Let $\mathsf{G_d}$ be either $\mathsf{GL}(d,\mathbb{R})$ or $\mathsf{PGL}(d,\mathbb{R})$, $\rho:\Gamma \rightarrow \mathsf{G}_d$ a representation and $1 \leqslant k \leqslant \frac{d}{2}$. Then $\rho$ is $P_{k}$-Anosov if and only if there exist $C,\alpha >0$ such that $$ \frac{\sigma_{k}(\rho(\gamma))}{\sigma_{k+1}(\rho(\gamma))} \geqslant Ce^{\alpha |\gamma|_{\Gamma}}$$ for every $\gamma \in \Gamma$.
\par It is clear from the previous definition that for every quasiconvex subgroup $H$ of $\Gamma$ the restriction $\rho|_{H}$ is $P_{k}$-Anosov. The following theorem summarizes some of the properties of Anosov representations. 
\medskip

\begin{theorem}\label{mainproperties} \cite{GW,labourie-invent} Let $\mathsf{G}_d$ be either $\mathsf{GL}(d,\mathbb{R})$ or $\mathsf{PGL}(d,\mathbb{R})$ and $\Gamma$ be a word hyperbolic group. Suppose $1 \leqslant k \leqslant \frac{d}{2}$ and $\rho:\Gamma \rightarrow \mathsf{G}_{d}$ is a $P_{k}$-Anosov representation. Then:
\medskip

\noindent \textup{(i)} $\rho$ is a quasi-isometric embedding, i.e. there exist constants $A,C>0$ such that for every $\gamma \in \Gamma$ $$\frac{1}{C}|\gamma|_{\Gamma}-A \leqslant \log \frac{\sigma_1(\rho(\gamma))}{\sigma_{d}(\rho(\gamma))} \leqslant C|\gamma|_{\Gamma}+A$$ 
\medskip

\noindent \textup{(ii)} There exist continuous $\rho$-equivariant maps $$\xi_{\rho}^{k}:\partial_{\infty}\Gamma \rightarrow  \mathsf{Gr}_{k}(\mathbb{R}^d) \ \ \ \xi_{\rho}^{d-k}:\partial_{\infty}\Gamma \rightarrow  \mathsf{Gr}_{d-k}(\mathbb{R}^d)$$ which are dynamics preserving and for distinct points $x,y \in \partial_{\infty}\Gamma$ the pair $(\xi_{\rho}^{k}(x),\xi_{\rho}^{d-k}(y))$ is transverse.
\medskip

\noindent \textup{(iii)} The set of $P_{k}$-Anosov representations of $\Gamma$ in $\mathsf{G}_d$ is open in $\textup{Hom}(\Gamma,\mathsf{G}_d)$.
\end{theorem}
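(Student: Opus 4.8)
The plan is to extract all three properties from the singular-value gap characterization, treating (i) as elementary, (ii) as the substantive step, and (iii) as a stability statement. For (i), the upper bound is submultiplicativity of the top singular value: writing $\gamma=s_1\cdots s_n$ as a geodesic word over a fixed finite generating set and using $\sigma_1(\rho(\gamma))\leqslant\prod_i\sigma_1(\rho(s_i))$ together with $\sigma_d(\rho(\gamma))^{-1}=\sigma_1(\rho(\gamma^{-1}))\leqslant\prod_i\sigma_1(\rho(s_i^{-1}))$, I would take $C$ to be the maximum of $\log(\sigma_1(\rho(s))/\sigma_d(\rho(s)))$ over the generators $s$, giving $\log(\sigma_1(\rho(\gamma))/\sigma_d(\rho(\gamma)))\leqslant C|\gamma|_\Gamma$. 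For the lower bound I would use that the singular values are nonincreasing, so each factor $\log(\sigma_i(\rho(\gamma))/\sigma_{i+1}(\rho(\gamma)))$ is nonnegative and the telescoping sum satisfies $\log(\sigma_1(\rho(\gamma))/\sigma_d(\rho(\gamma)))\geqslant\log(\sigma_k(\rho(\gamma))/\sigma_{k+1}(\rho(\gamma)))\geqslant\alpha|\gamma|_\Gamma+\log C$ by the Anosov inequality; this is the left-hand estimate with $1/C:=\alpha$ and a suitable additive constant.

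For (ii) I would first reduce to the case $k=1$ through the Pl\"ucker embedding. Since the top two singular values of $\wedge^k\rho(\gamma)$ on $W:=\wedge^k\mathbb{R}^d$ are $\sigma_1\cdots\sigma_k$ and $\sigma_1\cdots\sigma_{k-1}\sigma_{k+1}$, their ratio is exactly $\sigma_k(\rho(\gamma))/\sigma_{k+1}(\rho(\gamma))$, so $\wedge^k\rho$ is $P_1$-Anosov on $W$. For a $P_1$-Anosov representation $\sigma:\Gamma\to\mathsf{GL}(W)$ I would build the attracting map as follows: given $x\in\partial_\infty\Gamma$ and a geodesic ray $(\gamma_n)$ with $\gamma_n\to x$, let $\ell_n\in\mathbb{P}(W)$ be the top left singular line of $\sigma(\gamma_n)$; the exponential gap $\sigma_1/\sigma_2(\sigma(\gamma_n))\geqslant Ce^{\alpha n}$ forces $\sigma(\gamma_n)$ to contract $\mathbb{P}(W)$ exponentially toward $\ell_n$ off a neighbourhood of its repelling hyperplane, whence $(\ell_n)$ is Cauchy and its limit $\zeta^{+}(x)$ is independent of the chosen ray (two rays to the same point fellow-travel, and the contraction absorbs the bounded discrepancy). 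Equivariance $\sigma(g)\zeta^{+}(x)=\zeta^{+}(gx)$ holds because the top left singular line of $\sigma(g)\sigma(\gamma_n)$ converges to $\sigma(g)\zeta^{+}(x)$, the fixed bounded distortion of $\sigma(g)$ being overwhelmed by the growing gap, and continuity follows from the uniform contraction estimate together with the fact that nearby boundary points admit rays with long common initial segments. The repelling map $\zeta^{-}:\partial_\infty\Gamma\to\mathsf{Gr}_{\dim W-1}(W)$ arises identically from the contragredient representation, producing the repelling hyperplanes $\zeta^{-}(x)$.

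Unwinding the reduction, the top singular line of $\wedge^k\rho(\gamma_n)$ is the Pl\"ucker image of the top-$k$ left singular space of $\rho(\gamma_n)$, hence decomposable, so $\zeta^{+}(x)$ lies in the closed set $\tau_k^{+}(\mathsf{Gr}_k(\mathbb{R}^d))$; I would set $\xi_\rho^{k}:=(\tau_k^{+})^{-1}\circ\zeta^{+}$ and symmetrically $\xi_\rho^{d-k}:=(\tau_k^{-})^{-1}\circ\zeta^{-}$. Transversality of $(\xi_\rho^{k}(x),\xi_\rho^{d-k}(y))$ for $x\neq y$ then follows from the transversality (line not contained in hyperplane) of $(\zeta^{+}(x),\zeta^{-}(y))$ in $W$: running the construction along a bi-infinite geodesic with endpoints $x,y$, the gap yields a uniform lower bound on the distance of the top singular line from the repelling hyperplane that survives in the limit. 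For the dynamics-preserving property, given an infinite-order $\gamma$ I would apply the Anosov inequality to the powers $\gamma^{m}$: since $|\gamma^{m}|_\Gamma\asymp m$, the gap $\log(\sigma_k/\sigma_{k+1})(\rho(\gamma^{m}))$ grows linearly, and comparing Cartan and Jordan projections (that is, $\tfrac1m\log\sigma_i(\rho(\gamma^{m}))\to\log\lambda_i(\rho(\gamma))$) yields $\lambda_k(\rho(\gamma))>\lambda_{k+1}(\rho(\gamma))$, i.e. $P_k$-proximality, with attracting fixed point $\xi_\rho^{k}(\gamma^{+})=x_{\rho(\gamma)}^{+}$, the common limit of the top-$k$ left singular spaces of $\rho(\gamma^{m})$. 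Finally, for (iii) I would invoke the Bochi--Potrie--Sambarino reformulation: the gap condition at $k$ is equivalent to the linear cocycle determined by $\rho$ over the compact geodesic flow space of $\Gamma$ admitting a dominated splitting of index $k$; such splittings persist under small $C^{0}$ perturbations of the cocycle, and a small perturbation of $\rho$ in $\textup{Hom}(\Gamma,\mathsf{G}_d)$ moves the generators, hence the cocycle, by an arbitrarily small amount, so the gap condition (with slightly degraded constants) holds on an open neighbourhood of $\rho$.

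The hard part will be (ii): making the convergence, ray-independence, continuity, and transversality of the limit maps genuinely uniform requires the quantitative contraction estimates on projective space coming from the singular-value gap, combined with careful use of the fellow-traveling geometry of $\Gamma$; parts (i) and (iii) then reduce to submultiplicativity and a cited stability statement respectively.
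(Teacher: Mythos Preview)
The paper does not prove this theorem: it is stated as a summary of known properties of Anosov representations, attributed to Guichard--Wienhard and Labourie, and no argument is supplied in the text. There is therefore no proof in the paper to compare your proposal against.

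That said, your sketch is a correct outline of the modern proof via the singular-value-gap characterization, in the spirit of \cite{BPS,KLP1,GGKW} (which the paper itself cites for the definition it adopts). Part (i) is exactly right. For (ii), your reduction to $k=1$ through the Pl\"ucker embedding, followed by the observation that the limit of decomposable lines is decomposable so that one can pull back along $\tau_k^{\pm}$, is clean and matches what the paper remarks just after the statement of the theorem; the substantive analytic content you flag at the end (uniform contraction estimates on projective space of the form ``the image under $A$ of the complement of a $\delta$-neighbourhood of the repelling hyperplane has diameter $\lesssim \delta^{-1}\sigma_2(A)/\sigma_1(A)$'', together with fellow-traveling of geodesic rays) is precisely what is needed and is standard. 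For (iii), invoking stability of dominated splittings from \cite{BPS} is appropriate and is one of the sources the paper lists; alternatively one could cite the openness theorems in \cite{GW,labourie-invent} directly, as the paper does.
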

\medskip

Notice that by the previous definition, the representation $\rho$ is $P_{k}$-Anosov if and only if $\wedge^k \rho$ is \hbox{$P_{1}$-Anosov}. The Anosov limit maps of $\wedge^k \rho$ are $\tau_{d,k}^{+}\circ \xi_{\rho}^{k}$ and $\tau_{d,k}^{-}\circ \xi_{\rho}^{d-k}$.
\medskip

We also need the following fact which implies the continuity of the first eigenvalue among $P_1$-Anosov representations.

\begin{fact} \label{continuity} Let $\{A_{t}\}_{t \in [0,1]}$ be a continuous family of proximal elements of $\mathsf{GL}(d,\mathbb{R})$. Then, the function $t \mapsto \ell_1(A_t)$ is continuous.\end{fact}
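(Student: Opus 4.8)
The plan is to use the fact that, for a $P_1$-proximal matrix, the eigenvalue of maximal modulus is a \emph{simple, isolated} eigenvalue, so it varies continuously by standard perturbation theory for roots of polynomials. First I would record the relevant elementary facts. The characteristic polynomial $\chi_{A_t}(z)=\det(zI-A_t)$ is monic of degree $d$ with coefficients that are polynomial in the entries of $A_t$, hence continuous in $t$; and the functions $t\mapsto \lambda_i(A_t)$ are continuous (the moduli of the eigenvalues, listed in decreasing order, depend continuously on the matrix). By hypothesis each $A_t$ is proximal, i.e. $\lambda_1(A_t)>\lambda_2(A_t)$ for all $t\in[0,1]$; consequently the eigenvalue of $A_t$ of modulus $\lambda_1(A_t)$ is unique, has algebraic multiplicity one, and — having modulus strictly larger than every other eigenvalue, in particular differing from that of any complex conjugate pair — is real. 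This real number is $\ell_1(A_t)$, and it is the only root of $\chi_{A_t}$ in the region $\{z:|z|>\lambda_2(A_t)\}$.

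Now fix $t_0\in[0,1]$ and choose $r>0$ so small that the closed disk $\overline D$, $D=D(\ell_1(A_{t_0}),r)$, contains no eigenvalue of $A_{t_0}$ other than $\ell_1(A_{t_0})$; in particular $\chi_{A_{t_0}}$ has no zero on $\partial D$. Since the zeros of a monic polynomial depend continuously on its coefficients (argument principle / Rouch\'e), there is $\delta>0$ such that for $|t-t_0|<\delta$ the polynomial $\chi_{A_t}$ has no zero on $\partial D$ and exactly one zero, counted with multiplicity, inside $D$. Shrinking $\delta$ and $r$ if necessary, the continuity of $\lambda_1$ and $\lambda_2$ together with the strict gap $\lambda_1(A_t)>\lambda_2(A_t)$ forces this unique zero to have modulus $>\lambda_2(A_t)$, hence, by the first paragraph, to equal $\ell_1(A_t)$. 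Therefore, by the (generalized) argument principle,
\[
\ell_1(A_t)\;=\;\frac{1}{2\pi i}\oint_{\partial D} z\,\frac{\chi_{A_t}'(z)}{\chi_{A_t}(z)}\,dz\qquad(|t-t_0|<\delta),
\]
and the right-hand side is continuous in $t$ on $(t_0-\delta,t_0+\delta)$ because the integrand is continuous in $(z,t)$ on the compact set $\partial D\times[t_0-\delta,t_0+\delta]$, with $z\in\partial D$ never a zero of the denominator. Since $t_0$ was arbitrary, $t\mapsto\ell_1(A_t)$ is continuous on $[0,1]$.

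The only point requiring genuine care — everything else is bookkeeping — is the assertion that the root trapped inside $D$ for $t$ near $t_0$ is really the eigenvalue of \emph{largest} modulus, rather than some smaller eigenvalue that happens to drift into $D$; this is exactly where the uniform spectral gap $\lambda_1(A_t)>\lambda_2(A_t)$ supplied by proximality, combined with the continuity of $\lambda_1$ and $\lambda_2$, enters. Equivalently, one could run the same argument through the rank-one spectral projector $\pi_t=\frac{1}{2\pi i}\oint_{\partial D}(zI-A_t)^{-1}\,dz$ and the identity $\ell_1(A_t)=\tr(A_t\pi_t)$, invoking continuity of the resolvent $z\mapsto(zI-A_t)^{-1}$; the accounting is identical.
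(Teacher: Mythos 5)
Your argument is correct and is essentially the paper's own proof: the paper disposes of this fact in one line (``follows immediately from the continuity of the characteristic polynomial''), and your Rouch\'e/argument-principle computation, together with the use of the gap $\lambda_1(A_t)>\lambda_2(A_t)$ to identify the trapped root as $\ell_1(A_t)$, is exactly the standard way of making that one line precise. No substantive difference in approach, so nothing further to compare.
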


\begin{proof} 

The conclusion follows immediately from the continuity of the characteristic polynomial of matrices. \end{proof}

\subsection{The work of Benoist.} We summarize here some results that we use from \cite{benoist-divisible0} and \cite{benoist-divisible3}. An open cone $C \subset \mathbb{R}^d$ is called \emph{properly convex} if it does not contain an affine line. A domain $\Omega \subset \mathbb{P}(\mathbb{R}^d)$ is called \emph{properly convex} if it is contained in some affine chart of $\mathbb{P}(\mathbb{R}^d)$ in which $\Omega$ is bounded and convex. An element $g \in \mathsf{GL}(d, \mathbb{R})$ is called positively semi-proximal if $\lambda_1(g)$ is an eigenvalue of $g$. A subgroup $\Gamma$ of $\mathsf{GL}(d, \mathbb{R})$ is called \emph{positively proximal} if it contains a proximal element and every proximal element of $\Gamma$ is positively proximal. 

\begin{lemma}\cite[Lemma 3.2]{benoist-divisible3} \label{cone} Let $\Gamma$ be a subgroup of $\mathsf{GL}(d, \mathbb{R})$ which preserves a properly convex open cone $C$ in $\mathbb{R}^d$. Then every $\gamma \in \Gamma$ is positively semi-proximal. In particular, every proximal element $\gamma \in \Gamma$ is positively proximal. \end{lemma}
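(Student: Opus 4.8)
The second assertion follows at once from the first: if $\gamma\in\Gamma$ is proximal it has a unique eigenvalue $\ell_1(\gamma)\in\mathbb{C}$ of modulus $\lambda_1(\gamma)$, of algebraic multiplicity one; since the first assertion produces a real positive eigenvalue of $\gamma$ equal to $\lambda_1(\gamma)$, that eigenvalue must be $\ell_1(\gamma)$, so $\ell_1(\gamma)=\lambda_1(\gamma)>0$. Thus the whole content is that for every $\gamma\in\Gamma$ the spectral radius $\lambda_1(\gamma)$ is an eigenvalue of $\gamma$, a Perron--Frobenius/Krein--Rutman statement for the closed cone $K:=\overline{C}$; the plan is to prove it by hand from Brouwer's theorem. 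First I would record that $K$ is a \emph{proper} cone: it is closed, convex, has nonempty interior $\mathrm{int}(K)=C$, and is pointed, i.e. $K\cap(-K)=\{0\}$ --- otherwise $K$, being a convex cone, would contain a line through $0$, and since $C+K\subseteq C$ this forces an affine line into $C$, contradicting proper convexity. Hence the dual cone $K^{*}=\{\phi:\phi|_{K}\geq 0\}$ is proper too, its interior is $\{\phi:\phi(x)>0\ \forall x\in K\setminus\{0\}\}$, and by compactness of $\{x\in K:\|x\|=1\}$ every $\phi\in\mathrm{int}(K^{*})$ satisfies $\phi\geq c_\phi\|\cdot\|$ on $K$ for some $c_\phi>0$.

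\emph{Producing the eigenvector.} Fix $x_0\in C$ and $\phi\in\mathrm{int}(K^{*})$ with $\phi(x_0)=1$, and for $\varepsilon>0$ put $A_\varepsilon:=\gamma+\varepsilon P$ with $P(x):=\phi(x)\,x_0$. Then $P$ preserves $K$ and $A_\varepsilon$ sends $K\setminus\{0\}$ into $C=\mathrm{int}(K)$, so $x\mapsto A_\varepsilon x/\phi(A_\varepsilon x)$ is a continuous self-map of the compact convex base $B:=\{x\in K:\phi(x)=1\}$; by Brouwer it has a fixed point $v_\varepsilon\in B\cap C$, giving $A_\varepsilon v_\varepsilon=\mu_\varepsilon v_\varepsilon$ with $\mu_\varepsilon=\phi(A_\varepsilon v_\varepsilon)>0$. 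From the telescoping identity $A_\varepsilon^{n}-\gamma^{n}=\sum_{k=0}^{n-1}A_\varepsilon^{k}(\varepsilon P)\gamma^{n-1-k}$, a sum of maps each preserving $K$, we get $\mu_\varepsilon^{n}v_\varepsilon-\gamma^{n}v_\varepsilon\in K$; applying $\phi$ and using $\phi\geq c_\phi\|\cdot\|$ on $K$ yields $\mu_\varepsilon^{n}\geq c_\phi\|\gamma^{n}v_\varepsilon\|$, so $\limsup_n\|\gamma^{n}v_\varepsilon\|^{1/n}\leq\mu_\varepsilon$. Letting $\varepsilon\to 0$ along a subsequence, $v_\varepsilon\to v\in B\setminus\{0\}$ and $\mu_\varepsilon\to\mu=\phi(\gamma v)>0$ with $\gamma v=\mu v$; since $\mu$ is an eigenvalue of $\gamma$ we have $\mu\leq\lambda_1(\gamma)$, so it only remains to show $\limsup_n\|\gamma^{n}v_\varepsilon\|^{1/n}=\lambda_1(\gamma)$, which will give $\mu_\varepsilon\geq\lambda_1(\gamma)$ for every $\varepsilon$ and hence $\mu=\lambda_1(\gamma)$.

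\emph{The main obstacle: interior points grow at the full rate.} I expect the heart of the matter to be the claim that for a $\gamma$-invariant proper cone $K$ and any $x\in\mathrm{int}(K)$ one has $\limsup_n\|\gamma^{n}x\|^{1/n}=\lambda_1(\gamma)$. The plan here is to use that order intervals of a proper cone are bounded: if $u\pm w\in K$ then, with $\phi\in\mathrm{int}(K^{*})$ and $\phi\geq c_\phi\|\cdot\|$ on $K$, $2\|w\|=\|(u+w)-(u-w)\|\leq\|u+w\|+\|u-w\|\leq c_\phi^{-1}\phi\big((u+w)+(u-w)\big)=2c_\phi^{-1}\phi(u)$, so $\|w\|\leq C\|u\|$ with $C$ independent of $u$. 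If $x\in\mathrm{int}(K)$, choose $\delta>0$ with $x\pm\delta e_i\in K$ for every basis vector $e_i$; applying $\gamma^{n}$ gives $\gamma^{n}x\pm\delta\gamma^{n}e_i\in K$, hence $\delta\|\gamma^{n}e_i\|\leq C\|\gamma^{n}x\|$, and therefore $\|\gamma^{n}\|\leq\sum_i\|\gamma^{n}e_i\|\leq (dC/\delta)\|\gamma^{n}x\|$. Taking $n$-th roots, $\lambda_1(\gamma)=\lim_n\|\gamma^{n}\|^{1/n}\leq\limsup_n\|\gamma^{n}x\|^{1/n}$, and the opposite inequality is automatic. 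Feeding this back into the previous step shows that $\mu=\lambda_1(\gamma)$ is an eigenvalue of $\gamma$ with eigenvector $v\in\overline{C}$, which is the assertion. (Alternatively one may simply quote the finite-dimensional Krein--Rutman theorem for the $\gamma$-invariant proper cone $\overline{C}$ and omit this self-contained argument.)
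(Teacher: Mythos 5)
Your argument is correct, and it is worth noting that the paper itself offers no proof of this statement: Lemma \ref{cone} is quoted verbatim from Benoist (Lemme 3.2 of \emph{Convexes divisibles III}), so you are supplying a proof where the paper supplies a citation. What you prove is exactly the finite-dimensional Krein--Rutman/Vandergraft theorem: a matrix preserving a proper cone has its spectral radius as an eigenvalue with eigenvector in the cone; Benoist's own argument in the cited reference runs along more spectral-decomposition lines (projecting onto the generalized eigenspaces of maximal modulus), whereas your route --- pointedness of $K=\overline{C}$, a functional $\phi\in\mathrm{int}(K^{*})$ with $\phi\geq c_\phi\|\cdot\|$ on $K$, the perturbation $A_\varepsilon=\gamma+\varepsilon P$ mapping $K\setminus\{0\}$ into $C$ so that Brouwer applies on the compact base $B$, the telescoping bound $\mu_\varepsilon^{n}\geq c_\phi\|\gamma^{n}v_\varepsilon\|$, and the order-interval estimate showing interior vectors grow at rate $\lambda_1(\gamma)$ --- is elementary and self-contained. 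I checked the individual steps: $C+\overline{C}\subseteq C$ (interior-point segment lemma for the convex cone $C$) justifies both the pointedness claim and $A_\varepsilon(K\setminus\{0\})\subseteq C$; the identity $A_\varepsilon^{n}-\gamma^{n}=\sum_{k}A_\varepsilon^{k}(\varepsilon P)\gamma^{n-1-k}$ is a sum of cone-preserving maps, giving $\mu_\varepsilon\geq\limsup_n\|\gamma^n v_\varepsilon\|^{1/n}$; and the bound $\|\gamma^{n}\|\leq(dC/\delta)\|\gamma^{n}v_\varepsilon\|$ together with Gelfand's formula gives $\mu_\varepsilon\geq\lambda_1(\gamma)$, so the limit eigenvalue $\mu$ equals $\lambda_1(\gamma)$; the deduction of positive proximality from semi-proximality (uniqueness of the maximal-modulus eigenvalue forces $\ell_1(\gamma)=\lambda_1(\gamma)>0$) matches how the lemma is used in the paper. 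As you note, one could instead simply invoke Vandergraft's theorem for the $\gamma$-invariant proper cone $\overline{C}$, which is essentially what the citation to Benoist accomplishes.
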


Benoist characterized irreducible subgroups of $\mathsf{GL}(d,\mathbb{R})$ which preserve a properly convex cone in $\mathbb{R}^d$ as follows:

\begin{theorem}\cite[Proposition 1.1]{benoist-divisible0} \label{positivity3} Let $\Gamma$ be an irreducible subgroup of $\mathsf{GL}(d, \mathbb{R})$. Then $\Gamma$ preserves a properly convex open cone $C$ in $\mathbb{R}^d$ if and only if $\Gamma$ is positively proximal.\end{theorem}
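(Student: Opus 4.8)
\medskip

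The plan is to prove the two implications separately; the substantial one is the converse, that positive proximality forces the existence of an invariant cone. For the implication that preserving a properly convex open cone $C$ implies positive proximality, Lemma~\ref{cone} already shows that every proximal element of $\Gamma$ is positively proximal, so it remains only to produce one proximal element in $\Gamma$. As $\Gamma$ is irreducible and $d\geq 2$, its image in $\mathsf{PGL}(d,\mathbb{R})$ is not relatively compact — a compact group of automorphisms of $\mathbb{P}(C)$ would fix a point, forcing $\Gamma$ to preserve a line — and the structure theory of groups acting on properly convex domains then provides an automorphism of $\mathbb{P}(C)$ of hyperbolic type, which is represented by a biproximal, hence proximal, element of $\mathsf{GL}(d,\mathbb{R})$.

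For the converse, assume $\Gamma$ is positively proximal and let $\mathcal{P}\subset\Gamma$ be its nonempty set of $P_1$-proximal elements. For $\gamma\in\mathcal{P}$, write $\pi_\gamma=v_\gamma\otimes f_\gamma$ for the rank-one spectral projection of $\gamma$ onto its top eigenline along the repelling hyperplane $H_\gamma$, normalized by $f_\gamma(v_\gamma)=1$; the pair $(v_\gamma,f_\gamma)$ is determined up to a common sign. The core of the proof is to fix these signs \emph{coherently}, i.e.\ so that $f_\gamma(v_\delta)>0$ whenever $v_\delta\notin H_\gamma$. The tool is a Tits ping-pong estimate: for $\gamma,\delta\in\mathcal{P}$ in general position (each attracting ray off the other's repelling hyperplane), $\gamma^n\delta^m\in\mathcal{P}$ for $n,m$ large, and $\ell_1(\gamma^n\delta^m)/(\ell_1(\gamma)^n\ell_1(\delta)^m)$ tends to the unique nonzero eigenvalue of $\pi_\gamma\pi_\delta=f_\gamma(v_\delta)\,v_\gamma\otimes f_\delta$, which is its trace $f_\gamma(v_\delta)f_\delta(v_\gamma)$. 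Since $\Gamma$ is positively proximal and $\ell_1(\gamma),\ell_1(\delta)>0$, this forces $f_\gamma(v_\delta)f_\delta(v_\gamma)>0$, so $f_\gamma(v_\delta)$ and $f_\delta(v_\gamma)$ share a common sign $\sigma(\gamma,\delta)=\sigma(\delta,\gamma)$. The analogous three-fold estimate applied to $\gamma^n\delta^m\eta^\ell$ gives $f_\gamma(v_\delta)f_\delta(v_\eta)f_\eta(v_\gamma)>0$, i.e.\ $\sigma(\gamma,\delta)\sigma(\delta,\eta)\sigma(\eta,\gamma)=1$. Thus $\sigma$ is a symmetric $\{\pm1\}$-valued coboundary, so after fixing $\gamma_0\in\mathcal{P}$ and replacing each $(v_\gamma,f_\gamma)$ by $\sigma(\gamma_0,\gamma)\,(v_\gamma,f_\gamma)$ one has $f_\gamma(v_\delta)>0$ for all $\gamma,\delta\in\mathcal{P}$ with $v_\delta\notin H_\gamma$, and $f_\gamma(v_\delta)=0$ otherwise.

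With the orientations fixed, let $C$ be the closed convex cone generated by $\{v_\gamma:\gamma\in\mathcal{P}\}$. Coherence gives $C\subseteq\{f_\gamma\geq 0\}$ for every $\gamma\in\mathcal{P}$, so any $v\in C\cap(-C)$ satisfies $f_\gamma(v)=0$ for all $\gamma$, i.e.\ $v\in\bigcap_{\gamma\in\mathcal{P}}H_\gamma$; this subspace is $\Gamma$-invariant and proper, hence zero by irreducibility, so $C$ contains no affine line and is properly convex. Likewise $\operatorname{span}\{v_\gamma:\gamma\in\mathcal{P}\}$ is $\Gamma$-invariant and nonzero, hence equal to $\mathbb{R}^d$, so $C$ is full-dimensional and $\operatorname{int}(C)\neq\emptyset$. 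Finally $C$ is $\Gamma$-invariant: from $\pi_{g\gamma g^{-1}}=g\pi_\gamma g^{-1}$ one gets $gv_\gamma=\pm v_{g\gamma g^{-1}}$, and the sign cannot be globally $-$, since then $gC=-C$ and $g\gamma_0^n$ would be, for large $n$, a proximal element of $\Gamma$ with leading eigenvalue $\sim\ell_1(\gamma_0)^n f_{\gamma_0}(gv_{\gamma_0})<0$, contradicting positive proximality. Hence $\operatorname{int}(C)$ is a $\Gamma$-invariant properly convex open cone, which completes the converse.

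The hard part will be making the ping-pong uniform: one needs enough proximal elements of $\Gamma$ in general position that the two- and three-fold estimates apply to every relevant pair and triple, and that the coherence and orientation-preservation arguments go through with no exceptional configurations. This is the standard technical point in this circle of ideas, and it is handled using irreducibility — the $\Gamma$-orbit of an attracting ray is contained in no proper subspace, and that of a repelling hyperplane contains no fixed line, so a proximal element whose attracting ray happens to meet another element's repelling hyperplane can always be replaced by a $\Gamma$-conjugate in general position — together with the stability of proximality and of the leading-eigenvalue asymptotics under small perturbations, which is what turns ``rescaled product close to a rank-one operator with nonzero eigenvalue'' into ``proximal with the predicted top eigenvalue''.
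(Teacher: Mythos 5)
The paper does not prove this statement at all: it is quoted verbatim from Benoist \cite[Proposition 1.1]{benoist-divisible0}, so the only fair comparison is with Benoist's argument. Your ``if'' direction is in the right spirit of that argument: the asymptotic $\ell_1(\gamma^n\delta^m)/\ell_1(\gamma)^n\ell_1(\delta)^m \to f_\gamma(v_\delta)f_\delta(v_\gamma)$ is correct, positivity of all three eigenvalues does force sign-coherence on pairs and triples in general position, and the cone generated by coherently signed attracting eigenvectors is salient and spanning exactly for the reasons you give (irreducibility kills $\bigcap_\gamma H_\gamma$ and makes the span all of $\mathbb{R}^d$). But as written the proposal has two genuine gaps.

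First, in the ``only if'' direction, the existence of a proximal element is not supplied by any ``structure theory of groups acting on properly convex domains.'' Non-compactness of a subgroup of $\mathrm{Aut}(\mathbb{P}(C))$ does not yield a hyperbolic (biproximal) automorphism: an unbounded one-parameter unipotent subgroup of the automorphism group of the cone of positive-definite symmetric matrices preserves a properly convex open cone and contains no proximal element whatsoever. So irreducibility must enter this step in an essential way, and it is precisely the nontrivial content of Benoist's proposition: one has to work with rescaled limits $\gamma_n/\Vert\gamma_n\Vert$ in $\mathrm{End}(\mathbb{R}^d)$, use the invariant cone together with irreducibility to produce a rank-one limit, and then convert a rank-one limit into an honest proximal element of $\Gamma$. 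Your text asserts this step rather than proving it; Lemma \ref{cone} only handles the positivity of the proximal elements once they exist.

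Second, in the ``if'' direction the coherence and invariance arguments are only carried out for pairs in general position, and you explicitly defer the rest (``the hard part''). Concretely: $\sigma(\gamma_0,\gamma)$ is undefined whenever $v_\gamma\in H_{\gamma_0}$ or $v_{\gamma_0}\in H_\gamma$, so the global sign choice needs the general-position relation on proximal elements to be connected (for any two proximal elements, a third in general position with both), and producing such elements requires a genuine use of irreducibility, handling several transversality conditions simultaneously, not just ``replace by a conjugate.'' Likewise, the invariance step tacitly assumes the sign in $gv_\gamma=\pm v_{g\gamma g^{-1}}$ is independent of $\gamma$; a priori it could be mixed, and ruling that out again uses the pair estimate $f_{g\gamma g^{-1}}(v_{g\delta g^{-1}})=\varepsilon(g,\gamma)\varepsilon(g,\delta)f_\gamma(v_\delta)$ together with the same connectivity statement, while your final contradiction via $\ell_1(g\gamma_0^n)$ needs $f_{\gamma_0}(gv_{\gamma_0})\neq 0$, one more exceptional configuration to exclude. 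Until these points are proved, the proposal is a correct outline of the strategy behind Benoist's result rather than a proof of it.
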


We also have the following fact for subgroups of $\mathsf{GL}(d,\mathbb{R})$ which preserve properly convex domains in $\mathbb{P}(\mathbb{R}^d)$:

\begin{fact} \label{index2} \normalfont  Let $\Gamma$ be a subgroup of $\mathsf{GL}(d, \mathbb{R})$ which preserves a properly convex domain $\Omega \subset \mathbb{P}(\mathbb{R}^d)$. There exists a representation $\widetilde{\iota}:\Gamma \rightarrow \mathsf{GL}(d, \mathbb{R})$ and a group homomorphism $\varepsilon:\Gamma \rightarrow \mathbb{Z}/2$ such that: $\widetilde{\iota}(\gamma)=(-1)^{\varepsilon(\gamma)}\gamma$ for every $\gamma \in \Gamma$ and $\widetilde{\iota}(\Gamma)$ preserves a properly convex open cone $C$ lifting $\Omega$. Thus, if $\Gamma$ is also finitely generated the group $\Gamma_2:=\bigcap\{H:[\Gamma:H]\leqslant2\}$ has finite-index in $\Gamma$ and preserves the properly convex cone $C$.\end{fact}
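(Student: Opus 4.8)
The plan is to lift the properly convex domain $\Omega$ to a properly convex open cone $C\subset\mathbb{R}^d$, observe that the full preimage of $\Omega$ in $\mathbb{R}^d$ is the disjoint union $C\sqcup(-C)$, and let $\varepsilon$ record whether a given $\gamma\in\Gamma$ preserves $C$ or swaps it with $-C$. Concretely, I would first fix an affine chart of $\mathbb{P}(\mathbb{R}^d)$ containing $\Omega$: there is a linear functional $\ell$ on $\mathbb{R}^d$ which, after rescaling, identifies $\Omega$ with a bounded convex open subset of the affine hyperplane $\{\ell=1\}$. Set $C:=\mathbb{R}_{>0}\cdot\Omega=\{\,tx:t>0,\ x\in\Omega\,\}$. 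One checks directly that $C\subset\{\ell>0\}$ is open and convex (for $x,y\in\Omega$ and $t,s>0$ the point $\lambda tx+(1-\lambda)sy$ has positive $\ell$-value and its normalization lies in $\Omega$), and properly convex (an affine line inside $C$ would be contained in some $\{\ell=c\}$ with $c>0$, hence project to an affine line inside the bounded set $\Omega$). Writing $\pi:\mathbb{R}^d\setminus\{0\}\to\mathbb{P}(\mathbb{R}^d)$ for the projection, we then have $\pi(C)=\Omega$ and $\pi^{-1}(\Omega)=C\sqcup(-C)$, the union being disjoint since $C\subset\{\ell>0\}$ and $-C\subset\{\ell<0\}$.

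Next, every $\gamma\in\Gamma\subseteq\mathsf{GL}(d,\mathbb{R})$ preserves $\Omega$ setwise, so $\gamma$ restricts to a homeomorphism of $\pi^{-1}(\Omega)=C\sqcup(-C)$; since $C$ and $-C$ are exactly the connected components of this set, $\gamma$ permutes the pair $\{C,-C\}$, i.e.\ $\gamma C=C$ or $\gamma C=-C$. Define $\varepsilon(\gamma)\in\mathbb{Z}/2$ to be $0$ in the first case and $1$ in the second, so that $\gamma C=(-1)^{\varepsilon(\gamma)}C$; since $\gamma\mapsto\gamma C$ is the permutation action of $\Gamma$ on a two-element set, $\varepsilon$ is a homomorphism $\Gamma\to\mathbb{Z}/2$. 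Put $\widetilde{\iota}(\gamma):=(-1)^{\varepsilon(\gamma)}\gamma$. As scalars are central in $\mathsf{GL}(d,\mathbb{R})$ and $\varepsilon$ is a homomorphism, $\widetilde{\iota}$ is a representation; it satisfies $\pi\circ\widetilde{\iota}(\gamma)=\pi\circ\gamma$, so it induces the same action on $\Omega$, and $\widetilde{\iota}(\gamma)C=(-1)^{\varepsilon(\gamma)}\gamma C=(-1)^{2\varepsilon(\gamma)}C=C$, so $\widetilde{\iota}(\Gamma)$ preserves the cone $C$.

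Finally, assume $\Gamma$ is finitely generated, say by $n$ elements. A homomorphism $\Gamma\to\mathbb{Z}/2$ is determined by its values on a generating set, so $\operatorname{Hom}(\Gamma,\mathbb{Z}/2)$ is finite (of size at most $2^n$). Every subgroup $H\leqslant\Gamma$ with $[\Gamma:H]\leqslant2$ is the kernel of some $\chi\in\operatorname{Hom}(\Gamma,\mathbb{Z}/2)$ (subgroups of index $2$ are normal with quotient $\mathbb{Z}/2$, and $\Gamma$ itself is the kernel of the trivial homomorphism), so there are only finitely many such $H$, and $\Gamma_2=\bigcap_{\chi}\ker\chi$ is the kernel of the diagonal map $\Gamma\to(\mathbb{Z}/2)^{\operatorname{Hom}(\Gamma,\mathbb{Z}/2)}$, hence of finite index in $\Gamma$. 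Since $\ker\varepsilon$ has index at most $2$, we get $\Gamma_2\subseteq\ker\varepsilon$, so $\widetilde{\iota}$ restricts to the identity on $\Gamma_2$ and $\Gamma_2$ itself preserves $C$. I do not expect a genuine obstacle here: the only step that needs care is the lifting statement — a properly convex domain pulls back to a disjoint union of two opposite properly convex open cones, permuted by every linear automorphism of the domain — and that is elementary convex geometry, everything else being formal.
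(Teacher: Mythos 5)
Your argument is correct and complete: the lift $C=\mathbb{R}_{>0}\cdot\Omega$, the identification $\pi^{-1}(\Omega)=C\sqcup(-C)$, the sign homomorphism $\varepsilon$ coming from the permutation of the two components, and the finite-index argument for $\Gamma_2$ are all verified properly. The paper states this Fact without proof, and what you wrote is precisely the standard lifting argument it implicitly relies on, so there is nothing to add or correct.
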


We will also use the following fact:

\begin{proposition} \label{discrete} Let $\Gamma$ be a word hyperbolic group and $\rho:\Gamma \rightarrow \mathsf{GL}(d,\mathbb{R})$ be a representation. If there exists a continuous $\rho$-equivariant non-constant map $\xi:\partial_{\infty}\Gamma \rightarrow \mathbb{P}(\mathbb{R}^d)$, then $\rho$ is discrete and $\textup{ker}(\rho)$ is finite.  \end{proposition}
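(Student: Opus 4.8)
The plan is to argue by contradiction, first reducing the two conclusions to a single statement and then exploiting the convergence dynamics of $\Gamma$ on its boundary. We assume $\Gamma$ is non-elementary, so that $\partial_{\infty}\Gamma$ is an infinite compact metrizable space with no isolated points and $\Gamma$ acts on it as a convergence group; concretely, for every sequence $(\gamma_{n})_{n}$ of pairwise distinct elements of $\Gamma$ there are a subsequence, still written $(\gamma_{n})$, and points $a,b\in\partial_{\infty}\Gamma$ (not necessarily distinct) such that $\gamma_{n}x\to b$ for every $x\in\partial_{\infty}\Gamma\setminus\{a\}$, uniformly on compact subsets.

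First I would unify the two possible failure modes. If $\rho(\Gamma)$ is not discrete, some element of $\rho(\Gamma)$ is not isolated, hence there is a sequence of pairwise distinct elements $\rho(\gamma_{n})$ converging to some $g\in\mathsf{GL}(d,\mathbb{R})$; the $\gamma_{n}$ are then automatically pairwise distinct. If instead $\ker\rho$ is infinite, choose any sequence of pairwise distinct $\gamma_{n}\in\ker\rho$ and set $g=I$, so that $\rho(\gamma_{n})=g$ for all $n$. In either case one obtains pairwise distinct $\gamma_{n}\in\Gamma$ with $\rho(\gamma_{n})\to g$ for some $g\in\mathsf{GL}(d,\mathbb{R})$, and it suffices to derive a contradiction from this.

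Next, I would apply the convergence property to $(\gamma_{n})$, pass to a subsequence, and fix $a,b\in\partial_{\infty}\Gamma$ as above. For an arbitrary $x\in\partial_{\infty}\Gamma\setminus\{a\}$ we have $\gamma_{n}x\to b$, so continuity of $\xi$ gives $\xi(b)=\lim_{n}\xi(\gamma_{n}x)$, while $\rho$-equivariance and continuity of the $\mathsf{GL}(d,\mathbb{R})$-action on $\mathbb{P}(\mathbb{R}^{d})$ give $\xi(\gamma_{n}x)=\rho(\gamma_{n})\,\xi(x)\to g\,\xi(x)$. Hence $g\,\xi(x)=\xi(b)$, i.e. $\xi(x)=g^{-1}\xi(b)$, for every $x\neq a$; thus $\xi$ is constant on $\partial_{\infty}\Gamma\setminus\{a\}$. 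Since $a$ is not isolated, this set is dense, and continuity of $\xi$ forces $\xi$ to be constant on all of $\partial_{\infty}\Gamma$, contradicting the hypothesis. This shows simultaneously that $\rho$ is discrete and that $\ker\rho$ is finite.

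The argument is short, and the one step that requires genuine care is the appeal to the convergence dynamics: the sequence $(\gamma_{n})$ need not consist of powers of a single loxodromic element, so plain North--South dynamics does not suffice and one really uses the convergence-group property of the boundary action. One should also note that the source and sink $a,b$ are allowed to coincide, which is harmless here since the argument uses only that $\gamma_{n}x\to b$ for $x\neq a$. A minor secondary point: non-discreteness of $\rho(\Gamma)$ does produce a convergent sequence of distinct elements inside $\mathsf{GL}(d,\mathbb{R})$ (not merely one escaping to a non-invertible limit), which is immediate from the definition of a discrete subgroup and is what guarantees that $g$ is invertible, so that $g^{-1}\xi(b)$ is a well-defined point of $\mathbb{P}(\mathbb{R}^{d})$.
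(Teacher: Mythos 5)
Your argument is correct and is essentially the paper's proof: both take a sequence of pairwise distinct elements $(\gamma_n)$ with $\rho(\gamma_n)$ convergent, apply the convergence-group property of the boundary action to extract points $a,b$ with $\gamma_n x\to b$ for $x\neq a$, and use equivariance, continuity, and perfectness of $\partial_\infty\Gamma$ to force $\xi$ to be constant, a contradiction. The only cosmetic differences are that you allow an arbitrary invertible limit $g$ where the paper normalizes to $I_d$, and you dispose of an infinite kernel directly by the same argument rather than via the remark that the kernel is a torsion (hence finite) subgroup; your explicit non-elementarity assumption matches what the paper uses implicitly.
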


\begin{proof}
Assume that there exists an infinite sequence $(\gamma_n)_{n \in \mathbb{N}}$ of elements of $\Gamma$ with $\lim_{n}\rho(\gamma_n)=I_{d}$. The group $\Gamma$ acts on $\partial_{\infty}\Gamma$ as a convergence group, hence up to subsequence, there exists $\eta, \eta' \in \partial_{\infty}\Gamma$ with $\lim_{n}\gamma_n x=\eta$ for $x \neq \eta'$ and $\xi(x)=\xi(\eta)$, $x \neq \eta'$. Since $\partial_{\infty}\Gamma$ is perfect, $\xi$ has to be constant, a contradiction. In particular, $\textup{ker}(\rho)$ is a torsion subgroup of $\Gamma$, hence finite.\end{proof}

Let $F_k$ be the free group on $k$ generators. We close this section with the following proposition which follows by the work of Breuillard-Green-Guralnick-Tao (see \cite[Theorem 4.1]{BGGT}):

\begin{proposition}\cite{BGGT} \label{density} The set of Zariski dense representations from $F_2$ in $\mathsf{SL}(d, \mathbb{R})$ is dense in the representation variety $\textup{Hom}(F_k, \mathsf{SL}(d, \mathbb{R}))$. \end{proposition}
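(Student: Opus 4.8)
The plan is to reduce the statement to the classical fact — made precise by Breuillard–Green–Guralnick–Tao — that a ``generic'' pair of elements of a connected semisimple algebraic group generates a Zariski dense subgroup. First I would identify $\textup{Hom}(F_k,\mathsf{SL}(d,\mathbb{R}))$ with $\mathsf{SL}(d,\mathbb{R})^k$ by sending $\rho$ to $(\rho(a_1),\dots,\rho(a_k))$ for a fixed free basis $a_1,\dots,a_k$ of $F_k$, and observe that we may assume $k\geqslant 2$ (for $k=1$ there is nothing to prove, every cyclic subgroup having abelian Zariski closure). Then it suffices to produce a \emph{non-empty Zariski open} subset $\mathcal{U}\subseteq\mathsf{SL}(d,\mathbb{R})^k$ all of whose points are Zariski dense representations: a non-empty Zariski open subset of $\mathsf{SL}(d,\mathbb{R})^k$ is automatically dense in the Euclidean topology, since its complement is contained in a proper real algebraic subvariety of the connected smooth manifold $\mathsf{SL}(d,\mathbb{R})^k$, hence is nowhere dense.

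Next I would invoke \cite[Theorem 4.1]{BGGT}: there is a proper Zariski closed subset $\mathcal{Z}\subsetneq\mathsf{SL}(d)\times\mathsf{SL}(d)$ (one may take it defined over $\mathbb{Q}$; for us only properness matters) such that any pair $(g_1,g_2)$ lying outside $\mathcal{Z}$ generates a Zariski dense subgroup of $\mathsf{SL}(d)$. Pulling $\mathcal{Z}$ back along the projection $\mathsf{SL}(d)^k\to\mathsf{SL}(d)^2$ onto the first two factors and intersecting with the real points produces the desired set $\mathcal{U}$. It is Zariski open by construction; it is non-empty because $\mathcal{Z}$ is proper while $\mathsf{SL}(d,\mathbb{R})^2$ is Zariski dense in $\mathsf{SL}(d)^2$ (the group $\mathsf{SL}(d)$ being an irreducible variety whose set of real points is Zariski dense in it), so some real pair avoids $\mathcal{Z}$, and any $k$-tuple extending such a pair lies in $\mathcal{U}$; and every $\rho\in\mathcal{U}$ is Zariski dense because $\langle\rho(a_1),\rho(a_2)\rangle$ already is, hence so is the larger group $\rho(F_k)$. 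Combining this with the first paragraph completes the argument.

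The only delicate point — and what I would flag as the ``main obstacle'' were one to insist on a self-contained proof — is the passage from Zariski openness to Euclidean density over $\mathbb{R}$; this rests on the facts that $\mathsf{SL}(d)$ is irreducible with Zariski dense real points and that a proper real algebraic subset of a connected real analytic manifold has empty interior. If one wished to avoid citing \cite{BGGT} altogether, the substantive work would instead be twofold: (i) exhibiting a single Zariski dense pair in $\mathsf{SL}(d,\mathbb{R})$, for instance via a ping-pong or Tits-alternative type construction, and (ii) verifying that the set of $k$-tuples generating a Zariski dense subgroup of $\mathsf{SL}(d)$ is Zariski open — equivalently, that its complement is an honest subvariety rather than merely a countable union of subvarieties coming from the proper algebraic subgroups. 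It is exactly this openness (with a degree bound) that \cite[Theorem 4.1]{BGGT} provides, which is why invoking it makes the proof short.
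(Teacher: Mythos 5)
You take essentially the same route as the paper: the proposition is there simply quoted from \cite[Theorem 4.1]{BGGT}, and your write-up merely supplies the routine passage from a nonempty Zariski-open set of Zariski-dense-generating pairs to density in the Euclidean topology on $\mathsf{SL}(d,\mathbb{R})^k$ (real points Zariski dense, proper real algebraic subsets nowhere dense), which the paper leaves implicit. One cosmetic point: for $k=1$ the statement is not ``trivially true'' but empty, since no cyclic subgroup is Zariski dense when $d\geqslant 2$, so the proposition should simply be read, as the paper intends and its application requires, with $k\geqslant 2$.
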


\section {Proof of the main result}
In this section we give the proof of Theorem \ref{maintheorem}. First, we need the following lemma which is proved using a theorem of Kapovich-Leeb-Porti \cite{KLP2} (see also \cite{CLS}).

\begin{lemma} \label{free} Let $\Gamma$ be a torsion free non-elementary word hyperbolic group and $\rho:\Gamma \rightarrow \mathsf{GL}(d, \mathbb{R})$ be a representation which admits a continuous $\rho$-equivariant map $\xi:\partial_{\infty}\Gamma \rightarrow \mathbb{P}(\mathbb{R}^d)$. Suppose there exists $\gamma \in \Gamma$ such that $\rho(\gamma)$ is biproximal, $\xi(\gamma^{+})=x_{\rho(\gamma)}^{+}$ and $\xi(\gamma^{-})=x_{\rho(\gamma)}^{-}$. Then, there exist $a,b \in \Gamma$ such that $\langle a, b\rangle$ is a free quasiconvex subgroup of $\Gamma$ of rank $2$ and the restricted representation $\rho: \langle a,b \rangle \rightarrow \mathsf{GL}(d, \mathbb{R})$ is $P_1$-Anosov with Anosov limit map $\xi$. \end{lemma}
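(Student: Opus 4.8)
The plan is to use the biproximal element $\rho(\gamma)$ together with a second independent element to play a ping-pong argument, and to upgrade the resulting free subgroup to one which is quasiconvex and on which $\rho$ is $P_1$-Anosov by invoking the Kapovich-Leeb-Porti characterization of Anosov subgroups (the ``Morse'' or stability criterion). First I would fix the biproximal element $\g$: since $\Gamma$ is non-elementary and torsion free, I can find an infinite order element $h \in \Gamma$ whose fixed point pair $\{h^{+},h^{-}\}$ is disjoint from $\{\g^{+},\g^{-}\}$; then, by the north-south dynamics of convergence group actions on $\partial_\infty\Gamma$, for $N$ large enough the elements $a := \g^{N}$ and $b := h^{N} \g^{N} h^{-N}$ generate a free quasiconvex subgroup of rank $2$ (standard ping-pong on $\partial_\infty\Gamma$; the quasiconvexity of $\langle a,b\rangle$ follows because it is a rank-two free group quasi-isometrically embedded via the ping-pong attracting/repelling neighborhoods). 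Note that $b$ is again biproximal with $\xi(b^{+}) = x_{\rho(b)}^{+}$ and $\xi(b^{-}) = x_{\rho(b)}^{-}$, since $\rho(b)$ is conjugate to $\rho(a)$ by $\rho(h^{N})$ and $\xi$ is $\rho$-equivariant while $b^{\pm} = h^{N}\g^{\pm}$.

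Next I would run a ping-pong argument downstairs, in $\mathbb{P}(\mathbb{R}^d)$. Because $\rho(a)$ and $\rho(b)$ are biproximal, each has an attracting point and a repelling hyperplane in $\mathbb{P}(\mathbb{R}^d)$; by equivariance these attracting/repelling data are the images under $\xi$ (and its ``dual'' partner) of $a^{\pm}$ and $b^{\pm}$. Since the four boundary points $a^{+},a^{-},b^{+},b^{-}$ are distinct and $\xi$ separates them in the sense dictated by transversality of the relevant flags (here one uses that $\xi(a^{+}) \notin V_{\rho(a)}^{-}$ etc., which is exactly the biproximality/dynamics-preserving hypothesis), I can choose small disjoint neighborhoods in $\mathbb{P}(\mathbb{R}^d)$ of the attracting points avoiding the repelling hyperplanes, and after replacing $a,b$ by sufficiently high powers, the classical Tits/ping-pong lemma shows that $\langle \rho(a),\rho(b)\rangle$ is free of rank $2$, discrete, and that every nontrivial reduced word is proximal with attracting point and repelling hyperplane prescribed by the endpoints of the corresponding geodesic in the Cayley graph of $\langle a,b\rangle$. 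This already produces the desired $\xi$ as the equivariant limit map: for each $x \in \partial_\infty\langle a,b\rangle \subset \partial_\infty\Gamma$, the sequence of attracting points of longer and longer prefixes of the word representing $x$ converges, and by continuity of $\xi$ the limit is $\xi(x)$.

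It then remains to promote ``discrete faithful with a nice equivariant proximal limit map'' to ``$P_1$-Anosov''. Here I would quote the theorem of Kapovich-Leeb-Porti \cite{KLP2} (as in \cite{CLS}): a convergence-group boundary embedding into $\mathbb{P}(\mathbb{R}^d)$ which is transverse and dynamics preserving, together with the quasi-isometric embedding property, is equivalent to being $P_1$-Anosov; alternatively, since $\langle a,b\rangle$ is free and the ping-pong realizes a Schottky-type configuration, one gets the singular-value gap $\sigma_1/\sigma_2 \geq C e^{\alpha|w|}$ directly from the contraction estimates of the ping-pong neighborhoods (each generator, raised to the chosen power, contracts a fixed projective ball into a strictly smaller one with a definite rate, and these rates multiply along a reduced word whose length is comparable to $|w|_{\langle a,b\rangle}$). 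Quasiconvexity of $\langle a,b\rangle$ in $\Gamma$ guarantees $|w|_{\langle a,b\rangle} \asymp |w|_{\Gamma}$, so the Anosov inequality holds with respect to the word metric of $\Gamma$ restricted to $\langle a,b\rangle$. The main obstacle I anticipate is bookkeeping the compatibility between the two ping-pong games — upstairs in $\partial_\infty\Gamma$ and downstairs in $\mathbb{P}(\mathbb{R}^d)$ — so that the same powers of $a$ and $b$ work simultaneously; this is handled by first choosing the downstairs neighborhoods, then using continuity and equivariance of $\xi$ to pull back a compatible choice of upstairs neighborhoods, and finally taking the power $N$ large enough for both simultaneously. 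Everything else is standard.
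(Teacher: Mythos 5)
Your overall strategy is the same as the paper's (conjugate the biproximal element by an independent element, pass to high powers, and invoke Kapovich--Leeb--Porti \cite{KLP2} (cf.\ \cite{CLS}) to get a free, quasiconvex, $P_1$-Anosov subgroup whose limit map is the restriction of $\xi$), but there is a genuine gap at the one step that actually requires work. To launch either your hand-made projective ping-pong or the appeal to \cite[Theorem 7.40]{KLP2}, you need the \emph{cross}-transversality conditions between the fixed data of $\rho(a)=\rho(\gamma)^N$ and of its conjugate $\rho(b)=\rho(h^N)\rho(\gamma)^N\rho(h^{-N})$: e.g.\ that $x_{\rho(b)}^{\pm}=\rho(h^{N})x_{\rho(\gamma)}^{\pm}$ does not lie in $\mathbb{P}(V_{\rho(\gamma)}^{-})\cup\mathbb{P}(V_{\rho(\gamma^{-1})}^{-})$, and symmetrically. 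You assert these follow from ``exactly the biproximality/dynamics-preserving hypothesis,'' but the hypothesis only gives dynamics preservation at the single pair $\{\gamma^{+},\gamma^{-}\}$ and transversality of each proximal element to \emph{its own} repelling hyperplane; it says nothing a priori about where $\rho(h^{N})$ sends $x_{\rho(\gamma)}^{\pm}$. Moreover, your appeal to a ``dual partner'' of $\xi$ is unfounded: the lemma provides no equivariant map of repelling hyperplanes, and since $\xi$ is not assumed injective, distinctness of $a^{\pm},b^{\pm}$ in $\partial_{\infty}\Gamma$ does not by itself separate anything in $\mathbb{P}(\mathbb{R}^d)$.

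This missing verification is precisely the content of the paper's proof, and it is where the hypothesis $\xi(\gamma^{\pm})=x_{\rho(\gamma)}^{\pm}$ is actually used: if, say, $\rho(h^{N})x_{\rho(\gamma)}^{+}\in\mathbb{P}(V_{\rho(\gamma)}^{-})$, then, because $\mathbb{P}(V_{\rho(\gamma)}^{-})$ is closed and $\rho(\gamma)$-invariant, every limit point of $\rho(\gamma)^{n}\rho(h^{N})x_{\rho(\gamma)}^{+}$ lies in it; but by equivariance and continuity, $\rho(\gamma)^{n}\rho(h^{N})x_{\rho(\gamma)}^{+}=\xi(\gamma^{n}h^{N}\gamma^{+})\rightarrow\xi(\gamma^{+})=x_{\rho(\gamma)}^{+}$, which is transverse to $V_{\rho(\gamma)}^{-}$, a contradiction (and similarly for the other three conditions, using north--south dynamics on $\partial_{\infty}\Gamma$). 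With these transversality statements in hand, the rest of your outline (ping-pong or \cite[Theorem 7.40]{KLP2}, the singular-value gap, quasiconvexity, and identifying the Anosov limit map with $\xi|_{\partial_{\infty}\langle a,b\rangle}$) goes through as in the paper; without them, the argument cannot start.
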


\begin{proof} By Proposition \ref{discrete}, the representation $\rho$ is discrete and faithful. Let $t \in \Gamma$ be an infinite order element such that $\{\gamma^{+},\gamma^{-}\}\cap \{t^{+},t^{-}\}$ is empty. Up to conjugating $\rho$ we may assume that $x_{\rho(\gamma)}^{+}=[e_1], x_{\rho(\gamma^{-1})}^{+}=[e_d]$ and $V_{\rho(\gamma)}^{-}=\langle e_2,...,e_d \rangle$, $V_{\rho(\gamma^{-1})}^{-}=\langle e_1,...,e_{d-1} \rangle$. Then we notice that $$\rho(t^{\pm1})x_{\rho(\gamma)}^{+} \notin \mathbb{P}(V_{\rho(\gamma)}^{-})\cup \mathbb{P}(V_{\rho(\gamma^{-1})}^{-}) \ \ \ \textup{and} \ \ \ \rho(t^{\pm 1})x_{\rho(\gamma)}^{-} \notin \mathbb{P}(V_{\rho(\gamma)}^{-}) \cup \mathbb{P}(V_{\rho(\gamma^{-1})}^{-}) $$ For example, suppose that $\rho(t)x_{\rho(\gamma)}^{+} \in \mathbb{P}(V_{\rho(\gamma)}^{-})$, then $\lim_{n}\rho(\gamma^n)\rho(t)x_{\rho(\gamma)}^{+}=\lim_{n}\xi(\gamma^{n}t\gamma^{+})=\xi(\gamma^{+})=[e_1]$ has to be in $\mathbb{P}(V_{\rho(\gamma)}^{-})$, a contradiction. Since, $\lim_{n}\gamma^n t^{-1}\gamma^{+}=\gamma^{+}$ we have $\lim_{n}\rho(\gamma^n t^{-1})\xi(\gamma^{+})=x_{\rho(\gamma)}^{+}$ and $\rho(t^{-1})x_{\rho(\gamma^{-1})}^{+} \notin \mathbb{P}(V_{\rho(\gamma)}^{-})$. Then, by \cite[Theorem 7.40]{KLP2} (see also \cite[Theorem A2]{CLS}), there exists $N>0$ such that the group $H=\langle \gamma^N,t\gamma^nt^{-1}\rangle$ is a free group of rank $2$ and the restriction $\rho|_{H}$ is $P_1$-Anosov. The restriction $\rho|_{H}$ is also a quasi-isometric embedding hence $H$ is a quasiconvex subgroup of $\Gamma$ and its Anosov limit map is the restriction of $\xi$ on $\partial_{\infty}H$ considered as a subset of $\partial_{\infty}\Gamma$. \end{proof}

Recall that for a finitely generated group $\Gamma$, $\Gamma_2$ is defined to be the intersection of all finite-index subgroups of $\Gamma$ of index at most $2$. 

\begin{lemma} \label{positive1} Let $\Gamma$ be a torsion free one-ended word hyperbolic group and $\rho:\Gamma \ast \mathbb{Z} \rightarrow \mathsf{GL}(d, \mathbb{R})$ be a representation which admits a $\rho$-equivariant continuous map $\xi:\partial_{\infty}(\Gamma \ast \mathbb{Z}) \rightarrow \mathbb{P}(\mathbb{R}^d)$. Suppose that $\delta \in \Gamma_2$ is a non-trivial element such that $\rho(\delta)$ is biproximal and $\xi(\delta^{+})=x_{\rho(\delta)}^{+}$ and $\xi(\delta^{-})=x_{\rho(\delta)}^{-}$. Then $\rho(\delta)$ is positively proximal. \end{lemma}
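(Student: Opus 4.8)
The plan is to lift the boundary map over $\partial_{\infty}\Gamma$ to the unit sphere and to recover positive proximality of $\rho(\delta)$ as the triviality on $\Gamma_{2}$ of the resulting sign homomorphism $\Gamma\to\mathbb{Z}/2$. Fix a Euclidean norm on $\mathbb{R}^{d}$, write $\mathbb{S}(\mathbb{R}^{d})$ for the unit sphere with the (well-defined) $\mathsf{GL}(d,\mathbb{R})$-action $g\cdot v=gv/\Vert gv\Vert$, and let $\pi:\mathbb{S}(\mathbb{R}^{d})\to\mathbb{P}(\mathbb{R}^{d})$ be the $\mathsf{GL}(d,\mathbb{R})$-equivariant double covering.

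The first and crucial step is to confine $\xi(\partial_{\infty}\Gamma)$ to an affine chart, and this is where the free factor $\mathbb{Z}=\langle z\rangle$ does the work. I would set $g:=z\delta z^{-1}$; then $\rho(g)$ is proximal, its attracting point is $x^{+}_{\rho(g)}=\rho(z)x^{+}_{\rho(\delta)}$, its repelling hyperplane is $V^{-}_{\rho(g)}=\rho(z)V^{-}_{\rho(\delta)}$, and $\xi(g^{+})=\xi(z\delta^{+})=\rho(z)\xi(\delta^{+})=x^{+}_{\rho(g)}$ by equivariance together with the hypothesis $\xi(\delta^{+})=x^{+}_{\rho(\delta)}$. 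The point is that $g^{\pm}=z\delta^{\pm}$ lie in $z\,\partial_{\infty}\Gamma$, which is disjoint from $\partial_{\infty}\Gamma$ because $z\notin\Gamma$; hence every $\eta\in\partial_{\infty}\Gamma$ is $\neq g^{-}$, so $g^{n}\eta\to g^{+}$ and $\rho(g)^{n}\xi(\eta)=\xi(g^{n}\eta)\to x^{+}_{\rho(g)}$. Were $\xi(\eta)$ in the closed $\rho(g)$-invariant set $\mathbb{P}(V^{-}_{\rho(g)})$, this would put $x^{+}_{\rho(g)}$ in $\mathbb{P}(V^{-}_{\rho(g)})$, which is impossible. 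So $\xi(\partial_{\infty}\Gamma)$ misses the hyperplane $\mathbb{P}(V^{-}_{\rho(g)})$ entirely and thus lies in an affine chart $\mathcal{A}\cong\mathbb{R}^{d-1}$.

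Since $\mathcal{A}$ is contractible, $\pi^{-1}(\mathcal{A})\to\mathcal{A}$ is trivial, so $\xi|_{\partial_{\infty}\Gamma}$ admits a continuous lift $\widehat{\xi}:\partial_{\infty}\Gamma\to\mathbb{S}(\mathbb{R}^{d})$ with $\pi\circ\widehat{\xi}=\xi|_{\partial_{\infty}\Gamma}$. Now I would invoke one-endedness: $\partial_{\infty}\Gamma$ is connected, so for each $\gamma\in\Gamma$ the unit vectors $\rho(\gamma)\cdot\widehat{\xi}(\eta)$ and $\widehat{\xi}(\gamma\eta)$ — which have the same image under $\pi$ — differ by a sign $\epsilon(\gamma)\in\{\pm1\}$ that, by connectedness, is independent of $\eta$; one checks (using $\rho(\gamma_{1})\cdot(\rho(\gamma_{2})\cdot v)=\rho(\gamma_{1}\gamma_{2})\cdot v$) that $\epsilon:\Gamma\to\mathbb{Z}/2$ is a homomorphism. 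Evaluating $\rho(\delta)\cdot\widehat{\xi}(\delta^{+})=\epsilon(\delta)\widehat{\xi}(\delta^{+})$ and using that $\widehat{\xi}(\delta^{+})$ spans the (real, simple) top eigenline of $\rho(\delta)$ gives $\epsilon(\delta)=\mathrm{sign}\,\ell_{1}(\rho(\delta))$. Finally $\delta\in\Gamma_{2}$ lies in the kernel of every homomorphism $\Gamma\to\mathbb{Z}/2$ (each such kernel has index at most $2$), so $\epsilon(\delta)=1$ and $\ell_{1}(\rho(\delta))>0$; that is, $\rho(\delta)$ is positively proximal.

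The step I expect to be the real obstacle is precisely the existence of the lift $\widehat{\xi}$: a priori the double covering pulled back along $\xi|_{\partial_{\infty}\Gamma}$ need not be trivial, and absent the free factor there is no evident way to exclude this — which is why the hypothesis is stated for $\Gamma\ast\mathbb{Z}$ and why Step~1 conjugates $\delta$ by $z$; everything after that step is formal. A different route would try to build a $\rho(\Gamma)$-invariant properly convex domain out of $\xi(\partial_{\infty}\Gamma)$ and apply Fact~\ref{index2}, Lemma~\ref{cone} and Theorem~\ref{positivity3}, but I expect the sphere-lifting argument above to be shorter; note that the statement holds in all dimensions $d$, even-dimensionality being felt only elsewhere in the paper.
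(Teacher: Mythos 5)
Your argument is correct, and after the first step it diverges from the paper's proof in a genuine way. The first step is identical: you conjugate $\delta$ by the free generator to get a proximal element whose fixed points avoid $\partial_{\infty}\Gamma$, and use equivariance plus invariance of the repelling hyperplane to conclude that $\xi(\partial_{\infty}\Gamma)$ lies in an affine chart (this is exactly the paper's argument with $t=s\delta s^{-1}$; your justification that $z\partial_{\infty}\Gamma\cap\partial_{\infty}\Gamma=\emptyset$ is fine here because $\Gamma$ is a free factor, hence malnormal in $\Gamma\ast\mathbb{Z}$, so the limit sets of $\Gamma$ and $z\Gamma z^{-1}$ are disjoint). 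From there the paper passes to $V=\langle\xi(\partial_{\infty}\Gamma)\rangle$, takes the convex hull of the compact connected set $\xi(\partial_{\infty}\Gamma)$ in the chart (invariant by \cite[Proposition 2.8]{CT}), obtains a $\rho'(\Gamma)$-invariant properly convex domain, lifts to an invariant properly convex cone via Fact \ref{index2}, and invokes Benoist's Lemma \ref{cone} to get positivity of $\ell_1$ on $\Gamma_2$. You instead lift $\xi|_{\partial_{\infty}\Gamma}$ to the sphere over the contractible chart, use connectedness of $\partial_{\infty}\Gamma$ (this is where one-endedness enters, in both proofs) to define the sign homomorphism $\epsilon:\Gamma\rightarrow\mathbb{Z}/2$, identify $\epsilon(\delta)$ with $\mathrm{sign}\,\ell_{1}(\rho(\delta))$ by evaluating at $\delta^{+}$, and kill it because $\delta\in\Gamma_{2}$ lies in the kernel of every homomorphism to $\mathbb{Z}/2$. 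Your route is shorter and more elementary: it avoids the span $V$, the convex hull and \cite[Proposition 2.8]{CT}, Theorem/Lemma of Benoist on cones, and Fact \ref{index2} (whose content is essentially the same $\mathbb{Z}/2$-lifting mechanism you implement directly on the limit set rather than on a convex domain), and it does not even use biproximality or the condition $\xi(\delta^{-})=x_{\rho(\delta)}^{-}$. The paper's route produces extra structure (an invariant properly convex domain for the restricted representation), which is in the same spirit as Lemma \ref{positive2} and the lifting proposition used for Corollary \ref{cor1}, but for this particular lemma that structure is not needed. You are also right that the statement is dimension-free; even-dimensionality plays no role here.
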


\begin{proof} Let $s$ be a generator of the free cyclic factor, $t=s\delta s^{-1} \in \Gamma$ and notice that $\rho(t)$ is proximal with $\rho(s)x_{\rho(\delta)}^{+}=x_{\rho(t)}^{+}=\xi(t^{+})$ and $t^{\pm} \notin \partial_{\infty}\Gamma$. If $x \in \partial_{\infty}\Gamma$, $\lim_{n}\rho(t^n)\xi(x)=\lim_{n}\xi(t^n x)=\xi(t^{+})$. Since $\rho(t)$ preserves $V_{\rho(t)}^{-}$ and $\lim_{n}t^{n}x=t^{+}$, $\xi(x)$ cannot lie in $\mathbb{P}(V_{\rho(t)}^{-})$. It follows that $\xi(\partial_{\infty}\Gamma)$ lies in the affine chart $\mathbb{P}(\mathbb{R}^d)-\mathbb{P}(V_{\rho(t)}^{-})$. Let $V=\langle \xi(\partial_{\infty}\Gamma) \rangle$ and we consider the representation $\rho':\Gamma \rightarrow \mathsf{GL}(V)$ where $\rho'(\gamma)=\rho|_{V}(\gamma)$, $\gamma \in \Gamma$. The map $\xi$ is not constant, hence $\rho'$ is discrete and faithful. The map \hbox{$\xi: \partial_{\infty}\Gamma \rightarrow \mathbb{P}(V)$} is $\rho'$-equivariant, $\rho'(\delta)$ is proximal with attracting fixed point $\xi(\delta^{+})$ and $\ell_1(\rho(\delta))=\ell_1(\rho'(\delta))$. 
\par Then we notice that $\xi(\partial_{\infty}\Gamma)$ also lies in the affine chart $A=\mathbb{P}(V)-\mathbb{P}(V\cap V_{\rho(t)}^{-})$ of $\mathbb{P}(V)$. Since $\Gamma$ is one-ended, $\partial_{\infty}\Gamma$ and $\xi(\partial_{\infty}\Gamma)$ are connected. The convex hull of $\xi(\partial_{\infty}\Gamma)$ in $A$, say $\mathcal{C}$, is bounded and convex in $A$ and has non-empty interior since $\xi(\partial_{\infty}\Gamma)$ spans $V$. Then $\rho'(\Gamma)$ preserves $\xi(\partial_{\infty}\Gamma)$ and by \cite[Proposition 2.8]{CT} it also preserves $\mathcal{C}$. It follows that $\rho'(\Gamma)$ preserves the non-empty properly convex set $\Omega=\textup{Int}(\mathcal{C}) \subset \mathbb{P}(V)$. Fact \ref{index2} shows that there exists a representation $\widetilde{\rho}':\Gamma \rightarrow \mathsf{GL}(V)$ which preserves a properly convex cone $C \subset V$ and $\rho'(\gamma)=\widetilde{\rho}'(\gamma)$ for every $\gamma \in \Gamma_2$. By Lemma \ref{cone}, $\rho(\delta)$ is positively proximal in $\mathbb{P}(V)$ and hence in $\mathbb{P}(\mathbb{R}^d)$.\end{proof}

A torsion free word hyperbolic group $\Gamma$ is called \emph{rigid} if it does not admit a non-trivial splitting over a cyclic subgroup. For example, the fundamental group of a closed negatively curved Riemannian manifold of dimension at least $3$ is rigid. By a theorem of Bowditch \cite{Bowditch} the Gromov boundary $\partial_{\infty}\Gamma$ of a rigid hyperbolic group $\Gamma$ does not contain local cut points. 

\begin{lemma} \label{positive2} Let $\Gamma$ be a torsion free rigid one-ended word hyperbolic group. Let $\rho:\Gamma \rightarrow \mathsf{GL}(d, \mathbb{R})$ be a representation which admits a continuous $\rho$-equivariant map $\xi:\partial_{\infty}\Gamma \rightarrow \mathbb{P}(\mathbb{R}^d)$. Suppose that $\delta \in \Gamma_2$ is a non-trivial element such that $\rho(\delta)$ is biproximal and $\xi(\delta^{+})=x_{\rho(\delta)}^{+}$ and $\xi(\delta^{-})=x_{\rho(\delta)}^{-}$. Then $\rho(\delta)$ is positively proximal. \end{lemma}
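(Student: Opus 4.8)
The plan is to follow the template of the proof of Lemma \ref{positive1}, replacing the use of the free cyclic factor there (which supplied an element whose fixed points lie outside $\partial_{\infty}\Gamma$, and hence a projective hyperplane disjoint from $\xi(\partial_{\infty}\Gamma)$) by Bowditch's theorem that the boundary of a rigid hyperbolic group has no local cut points. As in Lemma \ref{positive1}, I would first pass to $V=\langle\xi(\partial_{\infty}\Gamma)\rangle$ and the restriction $\rho':\Gamma\rightarrow\mathsf{GL}(V)$. Since $\xi$ is non-constant, Proposition \ref{discrete} gives that $\rho'$ is discrete and faithful; moreover $\rho'(\delta)$ is proximal in $\mathbb{P}(V)$ with attracting fixed point $\xi(\delta^{+})$ (because $V$ is $\rho(\delta)$-invariant and contains the dominant eigenline $\xi(\delta^{+})$), and $\ell_1(\rho(\delta))=\ell_1(\rho'(\delta))$.

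Let $W\subset V$ be the repelling hyperplane of $\rho'(\delta)$, so that $\mathbb{P}(W)$ is $\rho'(\delta)$-invariant and does not contain $\xi(\delta^{+})$. A north--south dynamics argument exactly as in Lemma \ref{positive1} shows $\xi(x)\notin\mathbb{P}(W)$ for every $x\neq\delta^{-}$, whereas $\xi(\delta^{-})=x_{\rho(\delta)}^{-}\in\mathbb{P}(W)$; hence $\xi(\partial_{\infty}\Gamma)\cap\mathbb{P}(W)=\{x_{\rho(\delta)}^{-}\}$. I would also record that $\xi^{-1}(x_{\rho(\delta)}^{-})$ is a closed $\langle\delta\rangle$-invariant subset of $\partial_{\infty}\Gamma$ not containing $\delta^{+}$ (since $x_{\rho(\delta)}^{+}\neq x_{\rho(\delta)}^{-}$), and since a closed $\langle\delta\rangle$-invariant subset of $\partial_{\infty}\Gamma$ either is contained in a single fixed point of $\delta$ or contains both $\delta^{+}$ and $\delta^{-}$, this forces $\xi^{-1}(x_{\rho(\delta)}^{-})=\{\delta^{-}\}$.

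The key step is to deduce that $\xi(\partial_{\infty}\Gamma)$ is contained in an affine chart of $\mathbb{P}(V)$ (equivalently, in a properly convex subset). Work in the affine chart $U:=\mathbb{P}(V)\setminus\mathbb{P}(W)$ and write $x_{\rho(\delta)}^{-}=[v]$. By the previous paragraph $Y:=\xi(\partial_{\infty}\Gamma)\setminus\{x_{\rho(\delta)}^{-}\}$ accumulates at infinity in $U$ only at the single point $[v]$, so it escapes to infinity at most in the two opposite directions $+v$ and $-v$. I claim it escapes in only one of them. Indeed, if there were sequences $a_k,b_k\in\partial_{\infty}\Gamma$ with $\xi(a_k)$ escaping to infinity in direction $+v$ and $\xi(b_k)$ in direction $-v$, then $\xi(a_k),\xi(b_k)\rightarrow x_{\rho(\delta)}^{-}$, hence $a_k,b_k\rightarrow\delta^{-}$ because $\xi^{-1}(x_{\rho(\delta)}^{-})=\{\delta^{-}\}$; choosing a small ball $B$ around $[v]$ in $\mathbb{P}(V)$ for which $B\setminus\mathbb{P}(W)$ has two components, one approaching each of the directions $+v$ and $-v$, and then, using the absence of a local cut point at $\delta^{-}$, a connected open neighborhood $N$ of $\delta^{-}$ with $N\setminus\{\delta^{-}\}$ connected and $\xi(N)\subset B$, the connected set $\xi(N\setminus\{\delta^{-}\})$ (which lies in $B\setminus\mathbb{P}(W)$ by the previous paragraph) would have to lie in a single component of $B\setminus\mathbb{P}(W)$, contradicting that it contains $\xi(a_k)$ and $\xi(b_k)$ for large $k$. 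Consequently $Y$ lies, outside a bounded subset of $U$, inside an arbitrarily narrow cone about a single ray $\mathbb{R}_{\geq0}v$; combined with compactness of $\xi(\partial_{\infty}\Gamma\setminus N)$ this produces an affine hyperplane $\mathbb{P}(W')$, obtained by a small perturbation of $\mathbb{P}(W)$ away from that ray, disjoint from $\xi(\partial_{\infty}\Gamma)$.

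Once $\xi(\partial_{\infty}\Gamma)$ is inside an affine chart, its convex hull is a properly convex set with non-empty interior (as $\xi(\partial_{\infty}\Gamma)$ spans $V$ and $\partial_{\infty}\Gamma$ is perfect), and $\rho'(\Gamma)$ preserves it by \cite[Proposition 2.8]{CT}, since it preserves $\xi(\partial_{\infty}\Gamma)$. Then Fact \ref{index2} applied to $\rho'$ produces $\widetilde{\rho'}:\Gamma\rightarrow\mathsf{GL}(V)$ preserving a properly convex cone in $V$ and agreeing with $\rho'$ on $\Gamma_2$; since $\delta\in\Gamma_2$ and $\rho'(\delta)$ is proximal, Lemma \ref{cone} gives that $\rho'(\delta)$ is positively proximal in $\mathbb{P}(V)$, so $\ell_1(\rho(\delta))=\ell_1(\rho'(\delta))>0$ and $\rho(\delta)$ is positively proximal. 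I expect the main obstacle to be the key step above: controlling $\xi$ near $\delta^{-}$ precisely enough to fit $\xi(\partial_{\infty}\Gamma)$ into a single affine chart, which is exactly where rigidity enters, through the no-local-cut-point property and the resulting connectedness of small punctured neighborhoods of $\delta^{-}$.
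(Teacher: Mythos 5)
Your proof is correct, and its endgame coincides with the paper's (pass to $V=\langle\xi(\partial_{\infty}\Gamma)\rangle$, invariant properly convex domain via \cite[Proposition 2.8]{CT}, then Fact \ref{index2} and Lemma \ref{cone} applied to $\delta\in\Gamma_2$); the difference lies in how rigidity is used to place $\xi(\partial_{\infty}\Gamma)$ in an affine chart. The paper argues globally: no local cut points gives that $\partial_{\infty}\Gamma\setminus\{\delta^{+},\delta^{-}\}$ is connected, its image avoids \emph{both} repelling hyperplanes $\mathbb{P}(V^{-}_{\rho(\delta)})\cup\mathbb{P}(V^{-}_{\rho(\delta^{-1})})$, hence sits in one of the two components of the complement, from which a hyperplane $V_0$ missing all of $\xi(\partial_{\infty}\Gamma)$ is extracted. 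You instead use only the single hyperplane $W=V\cap V^{-}_{\rho(\delta)}$, show $\xi$ meets $\mathbb{P}(W)$ exactly at $x^{-}_{\rho(\delta)}$ with $\xi^{-1}(x^{-}_{\rho(\delta)})=\{\delta^{-}\}$, and exploit the absence of a local cut point \emph{at} $\delta^{-}$ (small connected punctured neighborhoods, which also uses local connectedness of $\partial_{\infty}\Gamma$, valid for one-ended hyperbolic groups) to see that the image approaches $x^{-}_{\rho(\delta)}$ from only one side of $\mathbb{P}(W)$, and then tilt $W$ to get a disjoint hyperplane. Each has a virtue: the paper's two-hyperplane, global-connectedness version is shorter, but its phrase ``we can find a hyperplane $V_0$'' conceals exactly the kind of tilting argument you make explicit (the relevant component of the complement of the two hyperplanes is not properly convex, and the closure of the image touches the hyperplanes at $x^{\pm}_{\rho(\delta)}$), whereas your local argument needs only the dynamics of $\delta$ near $\delta^{-}$ and one repelling hyperplane. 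Two small points: your dichotomy for closed $\langle\delta\rangle$-invariant sets should read ``if it omits $\delta^{+}$ it is contained in $\{\delta^{-}\}$'' (the conclusion you draw is nevertheless correct), and the equivalence between Bowditch's no-local-cut-point property and the existence of arbitrarily small connected punctured neighborhoods deserves at least a sentence or a reference, though it is at the same level of rigor as the paper's own appeal to connectedness of $\partial_{\infty}\Gamma\setminus\{\delta^{+},\delta^{-}\}$.
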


\begin{proof} Since $\partial_{\infty}\Gamma$ does not have any local cut points, the set $\partial_{\infty}\Gamma-\{\delta^{+},\delta^{-}\}$ is connected. For \hbox{$x \neq \delta^{+},\delta^{-}$} we have that $\lim_{n}\delta^{\pm n} x=\delta^{\pm}$ and, as in Lemma \ref{positive1}, the conected set $\xi \big(\partial_{\infty}\Gamma-\{\delta^{+},\delta^{-}\}\big)$ is contained in \hbox{$\mathbb{P}(\mathbb{R}^d)-\mathbb{P}(V_{\rho(\delta)}^{-})\cup\mathbb{P}(V_{\rho(\delta^{-1})}^{-})$}. Note that the two $(d-1)$-planes $V_{\rho(\delta)}^{-}$ and $V_{\rho(\delta^{-1})}^{-}$ are distinct, hence by the connectedness of $\partial_{\infty}\Gamma-\{\delta^{+},\delta^{-}\}$ we can find a hyperplane $V_0$ such that $\xi(\partial_{\infty}\Gamma)$ is contained in $\mathbb{P}(\mathbb{R}^d)-\mathbb{P}(V_0)$. Then we consider the restriction $\rho':\Gamma \rightarrow \mathsf{GL}(V)$, $V=\langle \xi(\partial_{\infty}\Gamma)\rangle$, whose image preserves the compact connected subset $\xi(\partial_{\infty}\Gamma)$ of the affine chart $\mathbb{P}(V)-\mathbb{P}(V\cap V_0)$ of $\mathbb{P}(V)$. The element $\rho'(\gamma)$ is proximal in $\mathbb{P}(V)$ and $\ell_1(\rho(\gamma))=\ell_1(\rho'(\gamma))$. We similarly conclude that $\rho'(\Gamma)$ preserves a properly convex domain $\Omega$ of $\mathbb{P}(V)$. Again, Fact \ref{index2} guarantees that $\rho'(\Gamma_2)$ preserves a properly convex cone of $V$ and $\ell_1(\rho'(\delta))>0$. \end{proof}

Now we combine the previous results to prove Theorem \ref{maintheorem}.

\medskip \noindent {\bf Theorem \ref{maintheorem}:} {\em Let $\Gamma$ be a word hyperbolic group and $\rho:\Gamma \rightarrow \mathsf{GL}(4q+2, \mathbb{R})$ a representation. Suppose that there exists a continuous, $\rho$-equivariant dynamics preserving map $\xi:\partial_{\infty}\Gamma \rightarrow  \mathsf{Gr}_{2q+1}(\mathbb{R}^{4q+2})$. Then $\Gamma$ is virtually free or virtually a surface group.}

\begin{proof}  We first assume that $\Gamma$ is a torsion free hyperbolic group. By Proposition \ref{discrete}, $\rho$ is faithful and we may assume that $\rho(\Gamma)$ is a subgroup of $\mathsf{SL}(4q+2, \mathbb{R})$. If not, we replace $\rho$ with the representation $\hat{\rho}:\Gamma \rightarrow \mathsf{SL}^{\pm}(n,\mathbb{R})$, $\hat{\rho}(\gamma)=|\textup{det}(\rho(\gamma))|^{-1/(4q+2)}\rho(\gamma)$ and $\Gamma$ with a finite-index subgroup $\Gamma_0$ such that $\hat{\rho}(\Gamma_0)$ is a subgroup of $\mathsf{SL}(4q+2,\mathbb{R})$. Notice that $\hat{\rho}$ has to be faithful since $\xi$ is $\hat{\rho}$-equivariant and dynamics preserving for $\hat{\rho}$. \par 
Let $V_q=\wedge^{2q+1}\mathbb{R}^{4q+2}$, and notice by assumption that $\xi_{q}=\tau_{2q+1}^{+}\circ \xi$ is $\wedge^{2k+1}\rho$-equivariant and dynamics preserving. We consider the following two cases:

\par \emph{Case 1.} Suppose that $\Gamma$ has infinitely many ends. Then we show that $\Gamma$ is free. If not, by Stallings' theorem \cite{Stallings}, there exists a splitting $\Gamma=\Gamma_1\ast...\ast\Gamma_k \ast F_{s}$, where $s \geqslant 0$ and for $1 \leqslant i \leqslant k$, $\Gamma_i$ is an one-ended word hyperbolic group. In particular, there exists a quasiconvex subgroup of $\Gamma$ of the form $\Delta \ast \mathbb{Z}$, with $\Delta$ one-ended. Lemma \ref{free}, shows that there exists a quasiconvex free subgroup $H_0$ of $\Delta_2$ such that $\wedge^{2q+1}\rho(H_0)$ is $P_1$-Anosov in $\mathsf{SL}(V_q)$ and its limit map is the restriction $\xi_{q}:\partial_{\infty}H_0 \rightarrow \mathbb{P}(V_q)$. \par Since $\wedge^{2q+1}\rho(\delta)$ is proximal for every $\delta \in H_0 \subset \Delta_2$, by Lemma \ref{positive1}, $\ell_1(\wedge^{2q+1}(\rho(\delta)))>0$. The representation $\rho:H_0 \rightarrow \mathsf{SL}(4q+2,\mathbb{R})$ is $P_{2q+1}$-Anosov and $\wedge^{2q+1}\rho(\gamma)$ is positively proximal for every non-trivial $\gamma \in H_0$. By Theorem \ref{mainproperties} \textup{(iii)}, we can find a path connected open neighbourhood $U$ of $\rho_{0}:=\rho|_{H_{0}}$ in $\textup{Hom}(H_0,\mathsf{SL}(4q+2,\mathbb{R}))$ consisting of entirely of $P_{2q+1}$-Anosov representations. Proposition \ref{density} guarantees that there exists $\rho_{1} \in U$ such that $\rho_{1}(F_k)$ is Zariski dense in $\mathsf{SL}(4q+2,\mathbb{R})$. Let $\{\rho_{t}\}_{0\leqslant t\leqslant 1}$ be a continuous path between $\rho_{0}$ and $\rho_{1}$ contained entirely in $U$. By Fact \ref{continuity}, for every $\gamma \in H_0$, the map $t \mapsto \ell_1(\wedge^{2q+1}\rho_{t}(\gamma))$ is continuous with real values and nowhere vanishing. Hence $\ell_1(\wedge^{2q+1}\rho_{1}(\gamma))>0$ for every $\gamma  \in H_0$. Therefore, since $\wedge^{2k+1}$ is an irreducible representation, the group $\wedge^{2q+1}\rho_1(H_0)$ is a strongly irreducible subgroup of $\mathsf{SL}(V_q)$ which is positively proximal. By Theorem \ref{positivity3}, the group $\wedge^{2q+1}\rho_{1}(H_0)$ preserves a properly convex cone and hence a properly convex domain of $\mathbb{P}(V_q)$. On the other hand, the group $\wedge^{2q+1}\mathsf{SL}(4q+2,\mathbb{R})$ (and hence $\wedge^{2q+1}\rho_{1}(H_0)$) preserves the symplectic non-degenerate form $\omega_{q}:V_q \times V_q \rightarrow \mathbb{R}$ given by the formula $\omega_{q}(a,b)=a\wedge b \in \langle e_1 \wedge...\wedge e_{4q+2} \rangle$. However, by \cite[Corollary 3.5]{benoist-divisible0}, a strongly irreducible subgroup of $\mathsf{SL}(d, \mathbb{R})$ which preserves a symplectic form cannot preserve a properly convex domain of $\mathbb{P}(\mathbb{R}^d)$. We have reached a contradiction, so $\Gamma$ cannot contain any non-trivial one-ended factors in its free product decomposition. Therefore, $\Gamma$ is free.

\par \emph{Case 2.} Suppose that $\Gamma$ is one-ended and not virtually a surface group. Wilton's result \cite[Corollary B]{Wilton} ensures that $\Gamma$ contains a quasiconvex subgroup $\Delta$ which is either isomorphic to a surface group or rigid. If $\Delta$ has infinite index in $\Gamma$, then there exists a quasiconvex subgroup of $\Gamma$ isomorphic to $\Delta \ast \mathbb{Z}$. However, by the previous case we obtain a contradiction. Therefore, we may assume that $\Delta$ is rigid and has finite index in $\Gamma$. By Lemma \ref{free}, there exists $H_1$ a quasiconvex free subgroup of $\Delta_2$ such that the restriction $\wedge^{2q+1}\rho|_{H_1}$ is $P_1$-Anosov. By Lemma \ref{positive2}, for every $h \in H_1$, $\wedge^{2q+1}\rho(h)$ is positively proximal in $\mathbb{P}(V_q)$. By continuing as previously, we obtain a $P_{2q+1}$-Anosov, Zariski dense deformation $\rho_1$ of $\rho|_{H_1}$ such that $\wedge^{2q+1}\rho_1(H_{k})$ is positively proximal. Again, by Theorem \ref{positivity3}, $\wedge^{2q+1}\rho_1(H_{k})$ preserves a properly convex domain and the symplectic form $\omega_{q}$, a contradiction.
\par If $\rho$ is not faithful, Proposition \ref{discrete} shows that $\textup{ker}(\rho)$ is finite. The group $\Gamma'=\Gamma/ \textup{ker}\rho$ is word hyperbolic, $\partial_{\infty}\Gamma'=\partial_{\infty}\Gamma$, so $\xi$ is a $\rho'$-equivariant dynamics preserving map, where $\rho':\Gamma' \rightarrow \mathsf{GL}(4q+2,\mathbb{R})$ is the faithful representation induced by $\rho$. By Selberg's lemma, there exists a torsion free finite-index subgroup $\Gamma_1$ of $\Gamma'$. The previous arguments imply that $\Gamma_1$ is either a surface group or a free group. Therefore, $\Gamma$ is either a finite extension of a virtually free group or a virtually surface group. In the second case, its boundary is the circle and by \cite{Gabai}, $\Gamma$ is virtually a surface group. In the first case, by \cite{Dunwoody}, $\Gamma$ has infinitely many ends and splits as the fundamental group of a finite graph of groups with finite edge groups and vertex groups of at most one end. The vertex groups of this splitting are also finite extensions of a virtually free group hence finite. It follows that $\Gamma$ is virtually free.\end{proof}

By following the argument of case 1 in of the proof of Theorem \ref{maintheorem} we obtain the following conclusion:

\begin{theorem}\label{positive} Let $F_2$ be the free group on two generators and $\rho:F_2 \rightarrow \mathsf{GL}(4q+2,\mathbb{R})$ a representation. Suppose that $\rho$ is $P_{2q+1}$-Anosov. Then $\wedge^{2q+1}\rho(F_2)$ is not a positively proximal subgroup of $\mathsf{GL}(\wedge^{2q+1}\mathbb{R}^{4q+2})$.\end{theorem}
\medskip

For the proof of Corollary \ref{cor1} we need the following proposition for the existence of lifts of \hbox{$P_{2k+1}$-Anosov} representations into $\mathsf{PGL}(d,\mathbb{R})$. The proof is similar to Lemma \ref{positive1} and \ref{positive2}. In the case $\rho$ is irreducible and $k=0$, Zimmer has proved the existence of lifts in \cite[Theorem 3.1]{Zimmer}.
\medskip

\begin{proposition} Let $\Gamma$ be a torsion free word hyperbolic group and $\rho:\Gamma \rightarrow \mathsf{PGL}(d, \mathbb{R})$ is a $P_{2k+1}$-Anosov representation, where $0 \leqslant k \leqslant \frac{d-1}{4}$. 
\medskip

\noindent \textup{(i)} Suppose that $\Delta$ is an infinite index, one-ended quasiconvex subgroup of $\Gamma$ and $\rho_{0}$ is the restriction of $\rho$ on $\Delta$. There exists a lift $\widetilde{\rho_0}:\Delta \rightarrow \mathsf{GL}(d, \mathbb{R})$ such that $\wedge^{2k+1}\widetilde{\rho_0}(\Delta)$ is positively proximal.\\

\noindent \textup{(ii)} If $\Gamma$ is a rigid word hyperbolic group then there exists a lift $\widetilde{\rho}:\Gamma \rightarrow \mathsf{GL}(d, \mathbb{R})$ of $\rho$ such that $\wedge^{2k+1}\rho(\Gamma)$ is positively proximal.\end{proposition}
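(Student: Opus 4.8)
\medskip

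The plan is to run the proofs of Lemmas~\ref{positive1} and~\ref{positive2} on the $(2k+1)$-st exterior power of $\rho$, and then to upgrade the resulting invariant properly convex cone into an honest lift into $\mathsf{GL}(d,\mathbb{R})$; the parity of $2k+1$ is what makes the last step work. Set $W=\wedge^{2k+1}\mathbb{R}^d$ and let $\overline{\wedge^{2k+1}}\colon\mathsf{PGL}(d,\mathbb{R})\to\mathsf{PGL}(W)$ be the morphism induced by $\wedge^{2k+1}$ (well defined, since a scalar $\lambda$ is sent to the scalar $\lambda^{2k+1}$); put $\beta:=\overline{\wedge^{2k+1}}\circ\rho$. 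As $\rho$ is $P_{2k+1}$-Anosov, $\beta$ is $P_1$-Anosov with continuous, equivariant, dynamics preserving limit map $\zeta:=\tau_{2k+1}^{+}\circ\xi_{\rho}^{2k+1}\colon\partial_{\infty}\Gamma\to\mathbb{P}(W)$. The elementary observation driving everything is that, $2k+1$ being \emph{odd}, $\lambda\mapsto\lambda^{2k+1}$ is a bijection of $\mathbb{R}^{*}$; hence $g\mapsto([g],\wedge^{2k+1}g)$ is an isomorphism $\mathsf{GL}(d,\mathbb{R})\xrightarrow{\ \sim\ }\mathsf{PGL}(d,\mathbb{R})\times_{\mathsf{PGL}(W)}\mathsf{GL}(W)$. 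Consequently it suffices to produce a lift $\widetilde\tau\colon\Delta\to\mathsf{GL}(W)$ (resp. $\Gamma\to\mathsf{GL}(W)$) of $\beta$ which is positively proximal: pairing $\widetilde\tau$ with $\rho$ through that isomorphism then yields the desired lift $\widetilde{\rho_0}$ (resp. $\widetilde\rho$) with $\wedge^{2k+1}\widetilde{\rho_0}=\widetilde\tau$.

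First I would find a projective hyperplane of $\mathbb{P}(W)$ missed by $\zeta$ on the boundary. In case \textup{(i)}: since $\Delta$ is quasiconvex of infinite index in the word hyperbolic group $\Gamma$, its limit set is a proper closed subset of $\partial_{\infty}\Gamma$, so there is an infinite order $t\in\Gamma$ with $\{t^{+},t^{-}\}\cap\partial_{\infty}\Delta=\emptyset$; the element $\beta(t)$ is $P_1$-proximal in $\mathbb{P}(W)$, and the argument of Lemma~\ref{positive1} applies verbatim --- if $\zeta(x)$ lay in the repelling hyperplane of $\beta(t)$ for some $x\in\partial_{\infty}\Delta$, then $\beta(t)^{n}\zeta(x)=\zeta(t^{n}x)$ would converge inside that hyperplane to $\zeta(t^{+})$, contradicting dynamics preservation --- so $\zeta(\partial_{\infty}\Delta)$ lies in an affine chart of $\mathbb{P}(W)$. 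In case \textup{(ii)}: rigidity forces $\Gamma$ one-ended and, by Bowditch's theorem \cite{Bowditch}, $\partial_{\infty}\Gamma$ has no local cut points; for any infinite order $\delta\in\Gamma$ the repelling hyperplanes of $\beta(\delta)$ and $\beta(\delta^{-1})$ are distinct, $\partial_{\infty}\Gamma\setminus\{\delta^{+},\delta^{-}\}$ is connected, and the argument of Lemma~\ref{positive2} produces a hyperplane missed by $\zeta(\partial_{\infty}\Gamma)$. In either case write $\Lambda$ for the boundary image and $U:=\langle\Lambda\rangle\subseteq W$; then $\beta(\Delta)$ (resp. $\beta(\Gamma)$) preserves $U$, and inside $\mathbb{P}(U)$ the set $\Lambda$ is compact, connected (by one-endedness), contained in an affine chart, and spans $U$.

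Next, as in Lemmas~\ref{positive1} and~\ref{positive2}, the interior $\Omega$ of the convex hull of $\Lambda$ in that affine chart is a properly convex domain of $\mathbb{P}(U)$ preserved by the image of $\Delta$ (resp. $\Gamma$) in $\mathsf{PGL}(U)$, by \cite[Proposition 2.8]{CT}. Let $C\subseteq U$ be one of the two properly convex open cones with $\mathbb{P}(C)=\Omega$. Running the construction of Fact~\ref{index2} over $\mathsf{PGL}(U)$ rather than $\mathsf{GL}(U)$: each $\overline g$ in our subgroup of $\mathsf{PGL}(U)$ has a linear representative preserving $C$ --- one of $\pm$ a chosen representative, as $-\mathrm{Id}_{U}$ carries $C$ to $-C\neq C$ --- unique once normalised to have determinant $\pm1$, and this representative depends multiplicatively on $\overline g$; this defines a homomorphism $\widetilde\sigma\colon\Delta\to\mathsf{GL}(U)$ (resp. $\Gamma\to\mathsf{GL}(U)$) lifting $\beta|_{U}$ with $\widetilde\sigma(\Delta)$ preserving $C$, and here no passage to an index-two subgroup occurs precisely because $-\mathrm{Id}_{U}$ does not preserve $C$. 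By Lemma~\ref{cone} every proximal element of $\widetilde\sigma(\Delta)$ is positively proximal in $\mathbb{P}(U)$. Finally I would extend $\widetilde\sigma$ over $W$: for each $\gamma$ pick a linear representative $g_{\gamma}\in\mathsf{GL}(W)$ of $\beta(\gamma)$ (it preserves $U$) and rescale it so that $g_{\gamma}|_{U}=\widetilde\sigma(\gamma)$; since $\widetilde\sigma(\gamma\gamma')$ is invertible on $U$ the remaining scalar ambiguity is forced to be trivial, so $\gamma\mapsto g_{\gamma}=:\widetilde\tau(\gamma)$ is a homomorphism $\Delta\to\mathsf{GL}(W)$ (resp. $\Gamma\to\mathsf{GL}(W)$) lifting $\beta$ with $\widetilde\tau|_{U}=\widetilde\sigma$.

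It then remains to check positivity and to assemble the lift. For $\gamma\neq 1$ (hence of infinite order, by torsion-freeness) $\rho(\gamma)$ is $P_{2k+1}$-proximal, so $\beta(\gamma)$ is $P_1$-proximal in $\mathbb{P}(W)$ with attracting fixed point $\zeta(\gamma^{+})\in\mathbb{P}(U)$ by dynamics preservation; thus the top eigenline of the proximal matrix $\widetilde\tau(\gamma)$ lies in $U$ and $\ell_1(\widetilde\tau(\gamma))=\ell_1(\widetilde\tau(\gamma)|_{U})=\ell_1(\widetilde\sigma(\gamma))>0$ by the previous paragraph. Hence $\widetilde\tau$ is positively proximal, and feeding $(\rho,\widetilde\tau)$ into the isomorphism of the first paragraph produces a lift $\widetilde{\rho_0}\colon\Delta\to\mathsf{GL}(d,\mathbb{R})$ (resp. $\widetilde\rho\colon\Gamma\to\mathsf{GL}(d,\mathbb{R})$) of $\rho$ with $\wedge^{2k+1}\widetilde{\rho_0}=\widetilde\tau$ (resp. $\wedge^{2k+1}\widetilde\rho=\widetilde\tau$) positively proximal, as claimed. \textbf{The main obstacle} I anticipate is not conceptual but is the careful verification that the sign and scalar indeterminacies of the last two paragraphs cancel, so that $\widetilde\sigma$ and $\widetilde\tau$ are genuine homomorphisms --- this is exactly where the parity of $2k+1$ (bijectivity of $\lambda\mapsto\lambda^{2k+1}$ on $\mathbb{R}^{*}$, and $\wedge^{2k+1}(-\mathrm{Id})=-\mathrm{Id}$) and the non-invariance of $C$ under $-\mathrm{Id}$ are used; a secondary point is importing from Lemmas~\ref{positive1} and~\ref{positive2} the statement that $\zeta$ misses a hyperplane on the boundary, which relies on the one-endedness of $\Delta$ and, in case \textup{(ii)}, on Bowditch's no-local-cut-point theorem.
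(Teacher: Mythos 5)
Your argument is correct and follows essentially the same route as the paper's proof: reduce to the exterior-power ($P_1$) picture using that $2k+1$ is odd, use the affine-chart arguments of Lemmas \ref{positive1} and \ref{positive2} to place the limit set image in an affine chart of $\mathbb{P}(V)$ with $V$ its span, obtain an invariant properly convex cone and positivity from Lemma \ref{cone}, and then extend the lift from the invariant subspace to the whole space by the scalar-rigidity observation. Your explicit determinant-normalized, cone-preserving section and the fibered-product formulation of the odd-power reduction are just more detailed renderings of what the paper asserts (its initial observation, the ``we may assume $k=0$'' step, and the lift claimed via Fact \ref{index2}), so no substantive difference.
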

\medskip

\begin{proof}  We begin with the following observation: suppose that $\varphi:\Gamma \rightarrow \mathsf{PGL}(V_1\oplus V_2)$ is a representation such that $\varphi(\gamma)$ preserves $V_1$ for every $\gamma \in \Gamma$. If $\rho(\gamma)=[g_{\gamma}]$ then the map $\varphi_{0}(\gamma)=[g_{\gamma}|_{V_1}]$ is a well defined representation $\varphi_{0}:\Gamma \rightarrow \mathsf{PGL}(V_1)$. If $\varphi_0$ admits a lift $\widetilde{\varphi}_{0}$, then there exists a lift $\widetilde{\varphi}$ of $\varphi$ such that $\widetilde{\varphi}(\gamma)|_{V_1}=\widetilde{\varphi}_{0}(\gamma)$ for every $\gamma \in \Gamma$. The lift $\widetilde{\varphi}$ is defined as follows: for $\gamma \in \Gamma$, $\widetilde{\varphi}(\gamma)$ is the unique element $h_{\gamma} \in \mathsf{GL}(V_1\oplus V_2)$ such that the restriction of $h_{\gamma}$ on $V_1$ is $\widetilde{\varphi}_{0}(\gamma)$ and $\varphi(\gamma)=[h_{\gamma}]$. 
\par  Notice that we may asssume that $k=0$, because the exterior power $\wedge^{2k+1}:\mathsf{GL}(d,\mathbb{R})\rightarrow \mathsf{GL}(\wedge^{2k+1}\mathbb{R}^{d})$ is faithful. For part \textup{(i)}, we may consider $\delta \in \Gamma$ with $\delta^{\pm} \notin \partial_{\infty}\Delta$  and $\xi(\partial_{\infty}\Delta)$ is a connected compact subset of the affine chart $\mathbb{P}(\mathbb{R}^{d})-\mathbb{P}(V_{\rho(\delta)}^{-})$. In particular, $\xi(\partial_{\infty}\Delta)$ lies in the affine chart $A=\mathbb{P}(V)-\mathbb{P}(V\cap V_{\rho(\delta)}^{-})$ of $\mathbb{P}(V)$, where $V=\langle \xi(\partial_{\infty}\Delta) \rangle$. Since $\rho_{0}(\Delta)$ preserves $V$ there exists a well defined representation $\rho_{1}:\Delta \rightarrow \mathsf{PGL}(V)$. The image $\rho_{1}(\Delta)$ preserves the connected compact set $\xi(\partial_{\infty}\Delta)$ and hence the interior of the convex hull of $\xi(\partial_{\infty}\Delta)$ in $A$. There exists a lift $\widetilde{\rho_{1}}$ of $\rho_{1}$ into $\mathsf{GL}(V)$ such that $\widetilde{\rho_1}(\Delta)$ preserves a properly convex cone $C$ of $V$. The representation $\widetilde{\rho_1}$ is $P_1$-Anosov, faithful and by Lemma \ref{cone}, $\widetilde{\rho_1}(\gamma)$ is positively proximal for every $\gamma \in \Delta$ non-trivial. By our initial observation we obtain a lift $\widetilde{\rho_{0}}:\Delta \rightarrow \mathsf{GL}(d,\mathbb{R})$ of $\rho_{0}$ with $\widetilde{\rho_{0}}(\gamma)|_{V}=\widetilde{\rho_1}(\gamma)$. The representation $\widetilde{\rho_1}$ is $P_{1}$-Anosov with Anosov limit map $\xi$. For every non-trivial $\gamma \in \Delta$, the attracting fixed point of $\widetilde{\rho_0}(\gamma)$ is in $V$ and $\ell_1(\widetilde{\rho_0}(\gamma))=\ell_1(\widetilde{\rho_1}(\gamma))>0$.
\par The proof of \textup{(ii)} follows by observing, as in Lemma \ref{positive2}, that the image of $\partial_{\infty}\Gamma$ under the Anosov limit map $\xi$ lies in an affine chart of $\mathbb{P}(\mathbb{R}^d)$. Then we continue as previously to obtain the lift $\widetilde{\rho}$. \end{proof}
\medskip

\noindent \emph{Proof of Corollary \ref{cor1}}. We assume that $\Gamma$ is torsion free. If $\Gamma$ contains a quasiconvex infinite index one-ended subgroup $\Gamma_0$, there exists a lift $\widetilde{\rho}_0$ of $\rho|_{\Gamma_0}$ such that the group $\wedge^{2k+1}\widetilde{\rho}_{0}(\Gamma_0)$ is positively proximal, contradicting Theorem \ref{positive}. Also $\Gamma$ cannot be rigid again by part (ii) of the previous proposition. Therefore, $\Gamma$ is either free or has one end and by \cite[Corollary B]{Wilton} there exists a quasiconvex surface subgroup which has to be of finite index in $\Gamma$. In every case, since $\textup{ker}(\rho)$ is finite, the boundary $\partial_{\infty}\Gamma$ is either a circle or totally disconnected so $\Gamma$ is virtually free or virtually a surface group. $ \ \ \square$
\medskip

\noindent \emph{Proof of Corollary \ref{rigid}}. Suppose that there exists a continuous $\rho$-equivariant map $\xi$ and $\rho(\gamma) \in \rho(\Gamma)$ a $P_{2q+1}$-proximal element with $\xi(\gamma^{+})=x_{\rho(\gamma)}^{+}$ and $\xi(\gamma^{-})=x_{\rho(\gamma)}^{-}$. The map $\xi^{+}:=\tau_{2q+1}^{+}\circ \xi$ is $\wedge^{2q+1}\rho$-equivariant and by Lemma \ref{free} there exist a free quasiconvex subgroup $H$ of $\Gamma_2$ such that $\wedge^{2q+1}\rho|_{H}$ is $P_1$-Anosov. Lemma \ref{positive2} shows that $\wedge^{2q+1}\rho(H)$ is positively proximal, a contradiction by Theorem \ref{positive}.
\par Let $V_q=\wedge^{2q+1}\mathbb{R}^{4q+2}$ and $\xi^{-}=\tau^{-}_{2q+1}\circ \xi$. We show that the map $\xi^{+}$ cannot be spanning. Suppose that $\xi^{+}$ is spanning and $x_1,...,x_r \in \partial_{\infty}\Gamma$ with $V_q=\oplus_{i=1}^{r}\xi^{+}(x_i)$, $r=\textup{dim}(V_q)$. Since $\Gamma$ acts minimally on $\partial_{\infty}\Gamma$, every open subset $U$ of $\partial_{\infty}\Gamma$, $\xi^{+}(U)$ spans $V_q$ and the union $\cup_{i=1}^{r}\xi^{-}(x_i)$ cannot contain $\xi^{+}(\partial_{\infty}\Gamma)$. There exists $y \in \partial_{\infty}\Gamma$ and $1 \leqslant j \leqslant r$ with $V_q=\xi^{+}(x_j)\oplus \xi^{-}(y)=\xi^{+}(y)\oplus \xi^{-}(x_j)$. By the density of pairs $\{ (\delta^{+},\delta^{-}): \delta \in \Gamma\}$ in the set of $2$-tuples of $\partial_{\infty}\Gamma$, we can find $\gamma \in \Gamma$ such that $V_q=\xi(\gamma^{+})\oplus \xi^{-}(\gamma^{-})=\xi^{+}(\gamma^{-})\oplus \xi^{-}(\gamma^{+})$. 
\par Then we claim that $g=\wedge^{2q+1}\rho(\gamma)$ is a biproximal matrix. Up to conjugating $g$ we may assume that $\xi^{+}(\gamma^{+})=[e_1]$, $\xi^{-}(\gamma^{-})=[e_{1}^{\perp}]$ and write $g=\begin{bmatrix}
a(g) & 0\\ 
0 & A
\end{bmatrix}$ for some matrix $A \in \mathsf{GL}(e_1^{\perp})$. Suppose that $\lambda_1(A)\geqslant |a(g)|$. Let $p\geqslant 1$ be the largest possible dimension of a complex Jordan block corresponding to an eigenvalue of maximum modulus of $A$. Then there exists a subsequence $(k_n)_{n \in \mathbb{N}}$, $A_{\infty}$ a non-zero matrix and $b \in \mathbb{R}$ with $$\lim_{n \rightarrow \infty}\frac{1}{k_{n}^{p-1}\lambda_{1}(A)^{k_n}}g^{k_n}=\begin{bmatrix}
b & 0\\ 
0 & A_{\infty}
\end{bmatrix}$$ Since $\partial_{\infty}\Gamma$ is perfect and $\xi^{+}(\partial_{\infty}\Gamma)$ spans $V_q$, we may choose $x \in \partial_{\infty}\Gamma-\{ \gamma^{-} \}$ such that the projection of $\xi^{+}(x)$ in $e_{1}^{\perp}$ is not in $\textup{ker}(A_{\infty})$. Thus, $\lim_{n} g^{k_n}\xi^{+}(x)=\lim_{n}\xi^{+}(\gamma^{k_n}x)=\xi^{+}(\gamma^{+})$ cannot be the line $[e_1]$, a contradiction. It follows that $|a(g)|>\lambda_1(A)$ and $\wedge^{2q+1}\rho(\gamma)$ is proximal with attracting fixed point $\xi^{+}(\gamma^{+})$. Since $V_q=\xi^{+}(\gamma^{-})\oplus \xi^{-}(\gamma^{+})$, the same argument shows that $\wedge^{2q+1}\rho(\gamma^{-1})$ is proximal with attracting fixed point $\xi^{+}(\gamma^{-})$. The map $\xi^{+}$ (and hence $\xi$) preserves the dynamics of $\{\gamma^{-},\gamma^{+}\}$. This contradicts the fact that $\xi$ is nowhere dynamics preserving. Therefore, $\tau_{2q+1}^{+}(\xi(\partial_{\infty}\Gamma))$ lies in some proper vector subspace of $V_q$. $\ \ \square$\\

\section{Examples}\label{examples}
In this section we provide an example showing that the analogue of Theorem \ref{maintheorem} does not hold in dimensions which are multiples of $4$. Also, we give an example of a surface group representation $\rho$ into $\mathsf{SL}(4q+2,\mathbb{R})$ which is not $P_{2q+1}$-Anosov but admits a $\rho$-equivariant continuous dynamics preserving map $\xi$ into $\mathsf{Gr}_{2q+1}(\mathbb{R}^{4q+2})$. Let $S$ be a closed orientable hyperbolic surface and $\tau_{2}:\mathsf{SL}(2,\mathbb{C}) \rightarrow \mathsf{SL}(4,\mathbb{R})$ be the standard inclusion defined as $\tau_{2}(g)=\begin{bmatrix}
\textup{Re}(g) &-\textup{Im}(g) \\ 
\textup{Im}(g) & \textup{Re}(g)
\end{bmatrix}$ for $g \in \mathsf{SL}(2,\mathbb{C})$.

\medskip

\begin{Example} \normalfont Let $F_2$ be the free group on two generators. The group $\Gamma=\pi_1(S)\ast F_{2}$ admits an Anosov representation $\rho$ into $\mathsf{SL}(2,\mathbb{C})$ and hence $\tau_2\circ \rho$ is a $P_{2}$-Anosov representation into $\mathsf{SL}(4,\mathbb{R})$. For $k \in \mathbb{N}$, the representation $\rho_{k}=\oplus_{i=1}^{k}(\tau_{2}\circ \rho)$ of $\Gamma$ into $\mathsf{SL}(4k,\mathbb{R})$ is $P_{2k}$-Anosov. In fact, by Theorem \ref{mainproperties} (iii) and Proposition \ref{density} there exists a deformation $\rho_{k}'$ of $\rho_{k}$ which is Zariski dense and $P_{2k}$-Anosov. \end{Example}
\medskip

\begin{Example} \normalfont Let $M$ be the mapping torus of the closed hyperbolic surface $S$ with respect to a  fixed pseudo-Anosov homeomorphism $\phi: S \rightarrow S$. The group $\pi_1(M)$ contains a normal infinite index subgroup $\Gamma$ isomorphic with $\pi_1(S)$. By a theorem of Thurston \cite{Thurston} (see also Otal \cite{Otal}), the group $\pi_1(M)$ admits a convex cocompact representation $\iota$ into $\mathsf{PSL}(2,\mathbb{C})$. In fact, by \cite{Culler}, $\iota$ lifts to a quasi-isometric embedding $\widetilde{\iota}:\pi_1(M)\rightarrow \mathsf{SL}(2,\mathbb{C})$. By composing  $\tau_2$ with $\widetilde{\iota}$, we obtain a $P_2$-Anosov representation $\rho_1: \pi_1(M) \rightarrow \mathsf{SL}(4,\mathbb{R})$. The Cannon-Thurston map (see \cite{CannonThurston}), $\theta: \partial_{\infty}\pi_1(S)\rightarrow \partial_{\infty}\pi_1(M)$ composed with the Anosov limit map $\xi^{2}_{\rho_1}:\partial_{\infty}\pi_1(M) \rightarrow \mathsf{Gr}_{2}(\mathbb{R}^4)$ provides a $\rho_1|_{\Gamma}$-equivariant dynamics preserving map $\xi_0:\partial_{\infty}\Gamma \rightarrow \mathsf{Gr}_{2}(\mathbb{R}^4)$. Note that the representation $\rho_1|_{\Gamma}$ is not a quasi-isometric embedding, in particular not $P_2$-Anosov, since $\Gamma$ is not a quasiconvex subgroup of $\pi_1(M)$. Let $\rho_{F}:\Gamma \rightarrow \mathsf{SL}(2,\mathbb{R})$ be a Fuchsian representation with limit map $\xi_{\rho_F}^{1}$. The representation $\rho=(\oplus_{i=1}^{q} \rho_{1}|_{\Gamma}) \oplus \rho_{F}$ into $\mathsf{SL}(4q+2,\mathbb{R})$ is not $P_{2q+1}$-Anosov, however the $\rho$-equivariant map $\xi=(\oplus_{i=1}^{r}\xi_0)\oplus \xi_{\rho_F}^{1}$ is dynamics preserving.  \end{Example}
\vspace{0.4cm}

\end{document}